\providecommand{\U}[1]{\protect\rule{.1in}{.1in}}
\providecommand{\U}[1]{\protect\rule{.1in}{.1in}}
\newtheorem{theorem}{Theorem}
\newtheorem{corollary}[theorem]{Corollary}
\theoremstyle{definition}
\newtheorem{definition}[theorem]{Definition}
\newtheorem{example}[theorem]{Example}
\newtheorem{remark}[theorem]{Remark}
\let\oldendproof\endproof
\def\endproof{\hfill$\blacksquare$\oldendproof}
\begin{document}

\title{\textbf{Parameter estimation based on cumulative Kullback-Leibler divergence}}
\author{Yaser Mehrali$^{1}$, and Majid Asadi$^{2}$\\{\footnotesize Department of Statistics, University of Isfahan, Isfahan,
81744, Iran}\\{\footnotesize E-mail: yasermehrali@gmail.com}$^{1},$%
{\footnotesize \ m.asadi@sci.ui.ac.ir}$^{2}$}
\maketitle

\begin{abstract}
In this paper, we propose some estimators for the parameters of a statistical
model based on Kullback-Leibler divergence of the survival function in
continuous setting. We prove that the proposed estimators are subclass of
"generalized estimating equations\rq\rq estimators. The asymptotic properties
of the estimators such as consistency, asymptotic normality, asymptotic
confidence interval and asymptotic hypothesis testing are investigated.
\newline

\textbf{Key words and Phrases:} Entropy, Estimation, Generalized Estimating
Equations, Information Measures. \newline

\textbf{2010 Mathematics Subject Classification:} 62B10, 94A15, 94A17, 62G30,
62E20, 62F03, 62F05, 62F10, 62F12, 62F25.

\end{abstract}


\section{Introduction}

The Kullback-Leibler ($KL$) divergence or relative entropy is a measure of
discrimination between two probability distributions. If $X$ and $Y$ have
probability density functions $f$ and $g$, respectively, the $KL$ divergence
of $f$ relative to $g$ is defined as%
\[
D\left(  f||g\right)  =\int\limits_{%
\mathbb{R}
}f\left(  x\right)  \log\frac{f\left(  x\right)  }{g\left(  x\right)  }dx,
\]

for $x$ such that $g(x)\not =0$. The function $D\left(  f||g\right)  $ is
always nonnegative and it is zero if and only if $f=g$ $a.s.$.

Let $f\left(  x;\mbox{\boldmath
$\theta$}\right)  $ belong to a parametric family with $k$-dimensional
parameter vector $\mbox{\boldmath
$\theta$}\in\mathbf{\Theta\subset%
\mathbb{R}
}^{k}$ and $f_{n}$ be kernel density estimation of $f$ based on $n$ random
variables $\{X_{1},\ldots,X_{n}\}$ of distribution $X$.
\cite{Basu:Lindsay:1994} used $KL$ divergence of $f_{n}$ relative to $f\,$\ as%
\begin{equation}
D\left(  f_{n}||f\right)  =\int f_{n}\left(  x\right)  \log\frac{f_{n}\left(
x\right)  }{f\left(  x;\mbox{\boldmath
$\theta$}\right)  }dx, \label{eq1.1}%
\end{equation}
and defined the minimum $KL$ divergence estimator of $\mbox{\boldmath
$\theta$}$ as%
\[
\widehat{\mbox{\boldmath
$\theta$}}=\arg\underset{\mbox{\boldmath
$\theta$}\in\mathbf{\Theta}}{\inf}D\left(  f_{n}\left(  x\right)  ||f\left(
x;\mbox{\boldmath
$\theta$}\right)  \right)  .
\]

\cite{Lindsay:1994} proposed a version of \eqref{eq1.1} in discrete setting.
In recent years, many authors such as \cite{Morales:et:al:1995},
\cite{Jimenz:Shao:2001}, \cite{Broniatowski:Keziou:2009},
\cite{Broniatowski:2014}, \cite{Cherfi:2011,Cherfi:2012,Cherfi:2014} studied
the properties of minimum divergence estimators under different conditions.
\cite{Basu:et:al:2011} discussed in their book about the statistical inference
with the minimum distance approach.

Although the method of estimation based on $D\left(  f_{n}||f\right)  $ has
very interesting features, the definition is based on $f$ which, in general,
may not exist and also depends on $f_{n}$ which even if the number of samples
tends to infinity, there is no guarantee that converges to its true measure.

Let $X$ be a random variable with cumulative distribution function ($c.d.f$)
$F(x)$ and survival function ($s.f$) $\bar{F}\left(  x\right)  $. Based on $n$
observations $\{X_{1},\ldots,X_{n}\}$ of distribution $F\,$, define the
empirical cumulative distribution and survival functions, respectively, by%
\begin{equation}
F_{n}\left(  x\right)  =\sum_{i=1}^{n}\frac{i}{n}I_{\left[  X_{\left(
i\right)  },X_{\left(  i+1\right)  }\right)  }\left(  x\right)  ,
\label{eq1.2.1}%
\end{equation}
and%
\begin{equation}
\bar{F}_{n}\left(  x\right)  =\sum_{i=0}^{n-1}\left(  1-\frac{i}{n}\right)
I_{\left[  X_{\left(  i\right)  },X_{\left(  i+1\right)  }\right)  }\left(
x\right)  , \label{eq1.3}%
\end{equation}
where $I$ is the indicator function and $(0=X_{(0)}\leq)X_{(1)}\leq
X_{(2)}\leq\cdots\leq X_{(n)}$ are the ordered sample. $F_{n}(\bar{F}_{n})$ is
known in the literature as "empirical estimator" of $F(\bar{F})$.

In the case when $X$ and $Y$ are continuous nonnegative random variables with
$s.f$'s $\bar{F}$ and $\bar{G}$, respectively, a version of $KL$ in terms of
$s.f$'s $\bar{F}$ and $\bar{G}$ can be given as follows:%

\[
KLS\left(  \bar{F}|\bar{G}\right)  =\int_{0}^{\infty}\bar{F}\left(  x\right)
\log\frac{\bar{F}\left(  x\right)  }{\bar{G}(x)}dx-\left[  E\left(  X\right)
-E\left(  Y\right)  \right]  .
\]

The properties of this divergence measure are studied by some authors such as
\cite{Liu:2007} and \cite{Baratpour:HabibiRad:2012}.

In order to estimate the parameters of the model, \cite{Liu:2007} proposed
cumulative $KL$ divergence between the empirical survival function $\bar
{F}_{n}$ and survival function $\bar{F}$ (we call it $CKL\left(  \bar{F}%
_{n}||\bar{F}\right)  $) as
\begin{align*}
CKL\left(  \bar{F}_{n}||\bar{F}\right)   &  =\int\nolimits_{0}^{\infty}\bar
{F}_{n}\left(  x\right)  \log\frac{\bar{F}_{n}\left(  x\right)  }{\bar
{F}\left(  x;\mbox{\boldmath
$\theta$}\right)  }-\left[  \bar{F}_{n}\left(  x\right)  -\bar{F}\left(
x;\mbox{\boldmath
$\theta$}\right)  \right]  dx\\
&  =\int\nolimits_{0}^{\infty}\bar{F}_{n}\left(  x\right)  \log\bar{F}%
_{n}\left(  x\right)  dx-\int\nolimits_{0}^{\infty}\bar{F}_{n}\left(
x\right)  \log\bar{F}\left(  x;\mbox{\boldmath
$\theta$}\right)  dx-\left[  \bar{x}-E_{\mbox{\boldmath
$\theta$}}\left(  X\right)  \right]  .
\end{align*}

The cited author defined minimum $CKL$ divergence estimator ($MCKLE$) of
$\mbox{\boldmath
$\theta$}$ as%
\[
\widehat{\mbox{\boldmath
$\theta$}}=\arg\underset{\mbox{\boldmath
$\theta$}\in\mathbf{\Theta}}{\inf}CKL\left(  \bar{F}_{n}\left(  x\right)
||\bar{F}\left(  x;\mbox{\boldmath
$\theta$}\right)  \right)  .
\]

If consider the parts of $CKL\left(  \bar{F}_{n}||\bar{F}\right)  $ that
depends on $\mbox{\boldmath $\theta$}$ and define%
\begin{equation}
g\left(  \mbox{\boldmath
$\theta$}\right)  =E_{\mbox{\boldmath
$\theta$}}\left(  X\right)  -\int\nolimits_{0}^{\infty}\bar{F}_{n}\left(
x\right)  \log\bar{F}\left(  x;\mbox{\boldmath
$\theta$}\right)  dx, \label{eq1.5}%
\end{equation}
then the $MCKLE$ of $\mbox{\boldmath
$\theta$}$ can equivalently be defined by%
\[
\widehat{\mbox{\boldmath
$\theta$}}=\arg\underset{\mbox{\boldmath
$\theta$}\in\mathbf{\Theta}}{\inf}g\left(  \mbox{\boldmath
$\theta$}\right)  .
\]

Two important advantages of this estimator are that one does not need to have
the density function and for large values of $n$ the empirical estimator
$F_{n}$ tends to the distribution function $F$. \cite{Liu:2007} applied this
estimator in uniform and exponential models and \cite{Yari:Saghafi:2012} and
\cite{Yari:et:al:2013} used it for estimating parameters of Weibull
distribution; see also \cite{Park:et:al:2012} and \cite{Hwang:Park:2013}.
\cite{Yari:et:al:2013} found a simple form of \eqref{eq1.5} as%
\begin{equation}
g\left(  \mbox{\boldmath
$\theta$}\right)  =E_{\mbox{\boldmath
$\theta$}}\left(  X\right)  -\frac{1}{n}\sum_{i=1}^{n}h\left(  x_{i}\right)
=E_{\mbox{\boldmath
$\theta$}}\left(  X\right)  -\overline{h\left(  x\right)  }, \label{eq2.4}%
\end{equation}
where $\overline{h\left(  x\right)  }=\frac{1}{n}\sum_{i=1}^{n}h\left(
x_{i}\right)  $ for any function $h$ on $x$, and%
\begin{equation}
h\left(  x\right)  =\int\nolimits_{0}^{x}\log\bar{F}\left(
y;\mbox{\boldmath
$\theta$}\right)  dy. \label{eq2.5}%
\end{equation}

They also proved that%
\[
E\left(  h\left(  X\right)  \right)  =\int\nolimits_{0}^{\infty}\bar{F}\left(
x;\mbox{\boldmath
$\theta$}\right)  \log\bar{F}\left(  x;\mbox{\boldmath
$\theta$}\right)  dx,
\]
which shows that if $n$ tends to infinity, then $CKL\left(  \bar{F}_{n}%
||\bar{F}\right)  $ converges to zero.

The aim of the present paper is to investigate properties of $MCKLE$. The rest
of the paper is organized as follows: In section \ref{S3(Some Extensions)}, we
propose an extension of the $MCKLE$ in the case when the support of the
distribution is real line and provide some examples. In Section
\ref{S4(Asymptotic properties of estimators)}, we show that the proposed
estimator is in the class of generalized estimating equations ($GEE$).
Asymptotic properties of $MCKLE$ such as consistency, normality are
investigated in this section. In Section
\ref{S4(Asymptotic properties of estimators)}, we also provide some asymptotic
confidence intervals and asymptotic tests statistics based on $MCKLE$ to make
some inference on the parameters of the distribution.

\section{An Extension of $MCKLE$}

\label{S3(Some Extensions)} In this section, we propose an extension of the
$MCKLE$ for the case when $X$ is assumed to be a continuous random variable
with support $%
\mathbb{R}
$. It is known that%
\[
E_{\mbox{\boldmath
$\theta$}}\left\vert X\right\vert =\int\nolimits_{-\infty}^{0}F\left(
x\right)  dx+\int\nolimits_{0}^{\infty}\bar{F}\left(  x\right)  dx,
\]
\citep[see,][]{Rohatgi:Saleh:2015}.

We define the $CKL$ divergence and $CKL$ estimator in Liu approach as follows.

\begin{definition}
\label{def(5.1)}Let $X$ and $Y$ be random variables on $\mathbb{R}$ with
$c.d.f$'s \ $F\left(  x\right)  $ and $G\left(  x\right)  $, $s.f$'s $\bar
{F}\left(  x\right)  $ and $\bar{G}\left(  x\right)  $, finite means $E\left(
X\right)  $ and $E\left(  Y\right)  $, respectively. The $CKL$ divergence of
$\bar{F}$ relative to $\bar{G}$ is defined as%
\begin{align*}
CKL\left(  \bar{F}||\bar{G}\right)   &  =\int\nolimits_{-\infty}^{0}\left\{
F\left(  x\right)  \log\frac{F\left(  x\right)  }{G\left(  x\right)  }-\left[
F\left(  x\right)  -G\left(  x\right)  \right]  \right\}  dx\\
&  +\int\nolimits_{0}^{\infty}\left\{  \bar{F}\left(  x\right)  \log\frac
{\bar{F}\left(  x\right)  }{\bar{G}\left(  x\right)  }-\left[  \bar{F}\left(
x\right)  -\bar{G}\left(  x\right)  \right]  \right\}  dx\\
&  =\int\nolimits_{-\infty}^{0}F\left(  x\right)  \log\frac{F\left(  x\right)
}{G\left(  x\right)  }dx+\int\nolimits_{0}^{\infty}\bar{F}\left(  x\right)
\log\frac{\bar{F}\left(  x\right)  }{\bar{G}\left(  x\right)  }dx-\left[
E\left\vert X\right\vert -E\left\vert Y\right\vert \right]  .
\end{align*}

\end{definition}

An application of the log-sum inequality and the fact that $x\log\frac{x}%
{y}\geq x-y,\forall x,y>0$ (equality holds if and only if $x=y$) show that the
$CKL$ is non-negative. Using the fact that in log-sum inequality, equality
holds if and only if $F=G$, $a.s.$, one gets that $CKL\left(  \bar{F}||\bar
{G}\right)  =0$ if and only if $F=G$, $a.s.$ .

Let $F\left(  x;\mbox{\boldmath
$\theta$}\right)  $ be the population $c.d.f.$ with unknown parameters
$\mbox{\boldmath
$\theta$}$ and $F_{n}\left(  x\right)  $ be the empirical $c.d.f.$ based on a
random sample $X_{1},X_{2},\dots,X_{n}$ from $F\left(  x;\mbox{\boldmath
$\theta$}\right)  $. Based on above definition, the $CKL$ divergence of
$\bar{F}_{n}$ relative to $\bar{F}$ is defined as%
\[
CKL\left(  \bar{F}_{n}||\bar{F}\right)  =\int\nolimits_{-\infty}^{0}%
F_{n}\left(  x\right)  \log\frac{F_{n}\left(  x\right)  }{F\left(
x;\mbox{\boldmath
$\theta$}\right)  }dx+\int\nolimits_{0}^{\infty}\bar{F}_{n}\left(  x\right)
\log\frac{\bar{F}_{n}\left(  x\right)  }{\bar{F}\left(  x;\mbox{\boldmath
$\theta$}\right)  }dx-\left[  \bar{\left\vert x\right\vert }%
-E_{\mbox{\boldmath
$\theta$}}\left\vert X\right\vert \right]  ,
\]
where $\bar{|x|}$ is the mean of absolute values of the observations. Let us
also define%
\begin{equation}
g\left(  \mbox{\boldmath
$\theta$}\right)  =E_{\mbox{\boldmath
$\theta$}}\left\vert X\right\vert -\int\nolimits_{-\infty}^{0}F_{n}\left(
x\right)  \log F\left(  x;\mbox{\boldmath
$\theta$}\right)  dx-\int\nolimits_{0}^{\infty}\bar{F}_{n}\left(  x\right)
\log\bar{F}\left(  x;\mbox{\boldmath
$\theta$}\right)  dx. \label{eq5.13}%
\end{equation}

If $E_{\mbox{\boldmath
$\theta$}}\left\vert X\right\vert <\infty$ and $g^{\prime\prime}%
(\mbox{\boldmath
$\theta$})$ is positive definite, then we define $MCKLE$ of $\mbox{\boldmath
$\theta$}$ to be a value in the parameter space $\mathbf{\Theta}$ which
minimizes $g(\mbox{\boldmath
$\theta$})$. If $k=0$ (i.e., $X$ is nonnegative), then $g\left(
\mbox{\boldmath
$\theta$}\right)  $\ in \eqref{eq5.13}\ reduces to \eqref{eq1.5}.  So the
results of \cite{Liu:2007}, \cite{Yari:Saghafi:2012}, \cite{Yari:et:al:2013},
\cite{Park:et:al:2012} and \cite{Hwang:Park:2013} yield as special case.

It should be noted that by the law of large numbers $F_{n}\left(  x\right)  $
converges to $F\left(  x\right)  $ and $\bar{F}_{n}\left(  x\right)  $
converges to $\bar{F}\left(  x\right)  $ as $n$ tends to infinity.
Consequently $CKL\left(  \bar{F}_{n}||\bar{F}\right)  $ converges to zero. As
a consequence, if we take $\widehat{\mbox{\boldmath
$\theta$}}_{n}=T\left(  F_{n}\right)  $, then it is Fisher consistent, i.e.,
$T\left(  F\right)  =\mbox{\boldmath
$\theta$}$ (see, \cite{Fisher:1922} and \cite{Lindsay:1994}).

In order to study the properties of the estimator, we first find a simple form
of \eqref{eq5.13}. Let us introduce the following notations.%

\[
u\left(  x\right)  =\int\nolimits_{x}^{0}\log F\left(  y;\mbox{\boldmath
$\theta$}\right)  dy,
\]
and%
\begin{equation}
s\left(  x\right)  =I_{\left(  -\infty,0\right)  }\left(  x\right)  u\left(
x\right)  +I_{\left[  0,\infty\right)  }\left(  x\right)  h\left(  x\right)  ,
\label{eq5.8.1}%
\end{equation}
where $h$ is defined in \eqref{eq2.5}. Assuming that $x_{(1)},x_{(2)}%
,\dots,x_{(n)}$ denote the ordered observed values of the sample and that
$x_{\left(  k\right)  }<0\leq x_{\left(  k+1\right)  }$, for some value of
$k$, $k=0,\dots,n$. Then by \eqref{eq1.2.1}, \eqref{eq1.3}\ and
\eqref{eq5.13}, we have%
\begin{align*}
\int\nolimits_{-\infty}^{0}F_{n}\left(  x\right)  \log F\left(
x;\mbox{\boldmath
$\theta$}\right)  dx  &  =\int\nolimits_{-\infty}^{0}\sum_{i=1}^{n}\frac{i}%
{n}I_{\left[  x_{\left(  i\right)  },x_{\left(  i+1\right)  }\right)  }\left(
x\right)  \log F\left(  x;\mbox{\boldmath
$\theta$}\right)  dx\\
&  =\sum_{i=1}^{n}\frac{i}{n}\int\nolimits_{-\infty}^{\infty}I_{\left(
-\infty,0\right)  }\left(  x\right)  I_{\left[  x_{\left(  i\right)
},x_{\left(  i+1\right)  }\right)  }\left(  x\right)  \log F\left(
x;\mbox{\boldmath
$\theta$}\right)  dx\\
&  =\sum_{i=1}^{k-1}\frac{i}{n}\int\limits_{x_{\left(  i\right)  }%
}^{x_{\left(  i+1\right)  }}\log F\left(  x;\mbox{\boldmath
$\theta$}\right)  dx+\frac{k}{n}\int\limits_{x_{\left(  k\right)  }}^{0}\log
F\left(  x;\mbox{\boldmath
$\theta$}\right)  dx\\
&  =\frac{1}{n}\sum_{i=1}^{k-1}i\left[  u\left(  x_{\left(  i\right)
}\right)  -u\left(  x_{\left(  i+1\right)  }\right)  \right]  +\frac{k}%
{n}u\left(  x_{\left(  k\right)  }\right) \\
&  =\frac{1}{n}\sum_{i=1}^{k-1}\left[  i~u\left(  x_{\left(  i\right)
}\right)  -\left(  i+1\right)  u\left(  x_{\left(  i+1\right)  }\right)
\right]  +\frac{1}{n}\sum_{i=1}^{k-1}u\left(  x_{\left(  i+1\right)  }\right)
+\frac{k}{n}u\left(  x_{\left(  k\right)  }\right) \\
&  =\frac{1}{n}\left[  u\left(  x_{\left(  1\right)  }\right)  -k~u\left(
x_{\left(  k\right)  }\right)  \right]  +\frac{1}{n}\sum_{i=2}^{k}u\left(
x_{\left(  i\right)  }\right)  +\frac{k}{n}u\left(  x_{\left(  k\right)
}\right) \\
&  =\frac{1}{n}\sum_{i=1}^{k}u\left(  x_{\left(  i\right)  }\right)  .
\end{align*}

Using the same steps, we have%
\[
\int\nolimits_{0}^{\infty}\bar{F}_{n}\left(  x\right)  \log\bar{F}\left(
x;\mbox{\boldmath
$\theta$}\right)  dx=\frac{1}{n}\sum_{i=k+1}^{n}h\left(  x_{\left(  i\right)
}\right)  .
\]

So $g\left(  \mbox{\boldmath
$\theta$}\right)  $ in \eqref{eq5.13}\ gets the simple form%
\begin{align}
g\left(  \mbox{\boldmath
$\theta$}\right)   &  =E_{\mbox{\boldmath
$\theta$}}\left\vert X\right\vert -\frac{1}{n}\sum_{i=1}^{k}u\left(
x_{\left(  i\right)  }\right)  -\frac{1}{n}\sum_{i=k+1}^{n}h\left(  x_{\left(
i\right)  }\right) \nonumber\\
&  =E_{\mbox{\boldmath
$\theta$}}\left\vert X\right\vert -\frac{1}{n}\sum_{i=1}^{n}\left[  I_{\left(
-\infty,0\right)  }\left(  x_{i}\right)  u\left(  x_{i}\right)  +I_{\left[
0,\infty\right)  }\left(  x_{i}\right)  h\left(  x_{i}\right)  \right]
\nonumber\\
&  =E_{\mbox{\boldmath
$\theta$}}\left\vert X\right\vert -\frac{1}{n}\sum_{i=1}^{n}s\left(
x_{i}\right)  =E_{\mbox{\boldmath
$\theta$}}\left\vert X\right\vert -\overline{s\left(  x\right)  }.
\label{eq5.8.4}%
\end{align}

If $k=0$ (i.e., $X$ is nonnegative), then $g\left(  \mbox{\boldmath
$\theta$}\right)  $\ in \eqref{eq5.8.4}\ reduces to \eqref{eq2.4}. It can be
easily seen that%
\[
E\left(  s\left(  X\right)  \right)  =\int\nolimits_{-\infty}^{0}F\left(
x;\mbox{\boldmath
$\theta$}\right)  \log F\left(  x;\mbox{\boldmath
$\theta$}\right)  dx+\int\nolimits_{0}^{\infty}\bar{F}\left(
x;\mbox{\boldmath
$\theta$}\right)  \log\bar{F}\left(  x;\mbox{\boldmath
$\theta$}\right)  dx,
\]
which proves that if $n$ tends to infinity, then $CKL\left(  \bar{F}_{n}%
||\bar{F}\right)  $ converges to zero.

In The following, we give some examples.

\begin{example}
Let $\{X_{1},\ldots,X_{n}\}$ be sequence of $i.i.d.$ Normal random variables
with probability density function
\[
\phi\left(  x;\mu,\sigma\right)  =\frac{1}{\sqrt{2\pi\sigma^{2}}}\exp\left(
-\frac{1}{2}\left(  \frac{x-\mu}{\sigma}\right)  ^{2}\right)  ,\text{
\ \ \ }x\in%
\mathbb{R}
.
\]

In this case $E\left(  \left\vert X\right\vert \right)  =\mu\left[
2\Phi\left(  \frac{\mu}{\sigma}\right)  -1\right]  +2\sigma\phi\left(
\frac{\mu}{\sigma}\right)  $, where $\Phi$ denotes the distribution function
of standard normal. For this distribution, $h\left(  x\right)  $, $u\left(
x\right)  $ and $g\left(  \mu,\sigma\right)  $ do not have close forms. The
derivative of $g\left(  \mu,\sigma\right)  $ with respect to $\mu$ and
$\sigma$ and setting the results to zero gives respectively%
\[
2n\Phi\left(  \frac{\mu}{\sigma}\right)  -n-\sum_{\substack{i=1\\x_{i}<0}%
}^{k}\log\Phi\left(  \frac{x_{i}-\mu}{\sigma}\right)  +k\log\Phi\left(
-\frac{\mu}{\sigma}\right)  +\sum_{\substack{i=k+1\\x_{i}\geq0}}^{n}\log
\Phi\left(  \frac{\mu-x_{i}}{\sigma}\right)  -\left(  n-k\right)  \log
\Phi\left(  \frac{\mu}{\sigma}\right)  =0,
\]
and%
\begin{equation}
2n\phi\left(  \frac{\mu}{\sigma}\right)  +\sum_{\substack{i=1\\x_{i}<0}%
}^{k}\int\nolimits_{\frac{x_{i}-\mu}{\sigma}}^{-\frac{\mu}{\sigma}}\frac
{z\phi\left(  z\right)  }{\Phi\left(  z\right)  }dz-\sum
_{\substack{i=k+1\\x_{i}\geq0}}^{n}\int\nolimits_{-\frac{\mu}{\sigma}}%
^{\frac{x_{i}-\mu}{\sigma}}\frac{z\phi\left(  z\right)  }{1-\Phi\left(
z\right)  }dz=0. \label{eq5.20}%
\end{equation}

To obtain our estimators, we need to solve these equations which should be
solved numerically. For computational purposes, the following equivalent
equation can be solved instead of \eqref{eq5.20}.%
\[
2\phi\left(  \frac{\mu}{\sigma}\right)  +\int\nolimits_{\frac{x_{\left(
1\right)  }-\mu}{\sigma}}^{-\frac{\mu}{\sigma}}F_{n}\left(  \mu+\sigma
z\right)  \frac{z\phi\left(  z\right)  }{\Phi\left(  z\right)  }%
dz-\int\nolimits_{-\frac{\mu}{\sigma}}^{\frac{x_{\left(  n\right)  }-\mu
}{\sigma}}\bar{F}_{n}\left(  \mu+\sigma z\right)  \frac{z\phi\left(  z\right)
}{1-\Phi\left(  z\right)  }dz=0.
\]

Figure \ref{figgNormal} represents $g\left(  \mu,\sigma\right)  $ for a
simulated sample of size $100$ from Normal distribution with parameters
$\left(  \mu=2,\sigma=3\right)  $. {This figure shows that in this case
$g(\mu,\sigma)$ has minimum and hence the estimators of $\mu$ and $\sigma$ are
the values that minimize $g\left(  \mu,\sigma\right)  $.}

\begin{figure}[ptb]
\begin{center}
{\includegraphics[height=4in] {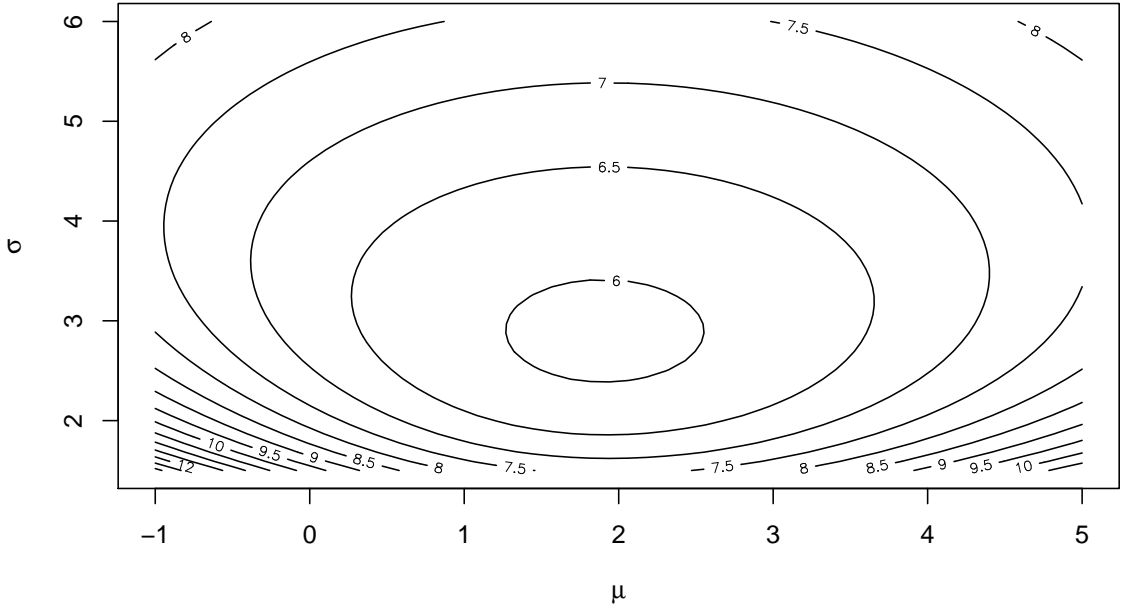}\\[0pt]}
\end{center}
\caption{$g\left(  \mu,\sigma\right)  $ for a simulated sample of size $100$
from Normal distribution with parameters $\left(  \mu=2,\sigma=3\right)  $}%
\label{figgNormal}%
\end{figure}

Figure \ref{figNormal} compares these estimators with the corresponding
$MLE$'s. In order to compare our estimators and the $MLE$'s we made a
simulation study in which we used samples of sizes $10$ to $55$ by $5$ with
$10000$ repeats, where we assume that the true values of the model parameters
are $\mu_{true}=2$ and $\sigma_{true}=3$. It is evident from the plots that
the $MCKLE$ approximately coincides with the $MLE$ in both cases.

\begin{figure}[ptb]
\begin{center}
{\includegraphics[height=4in] {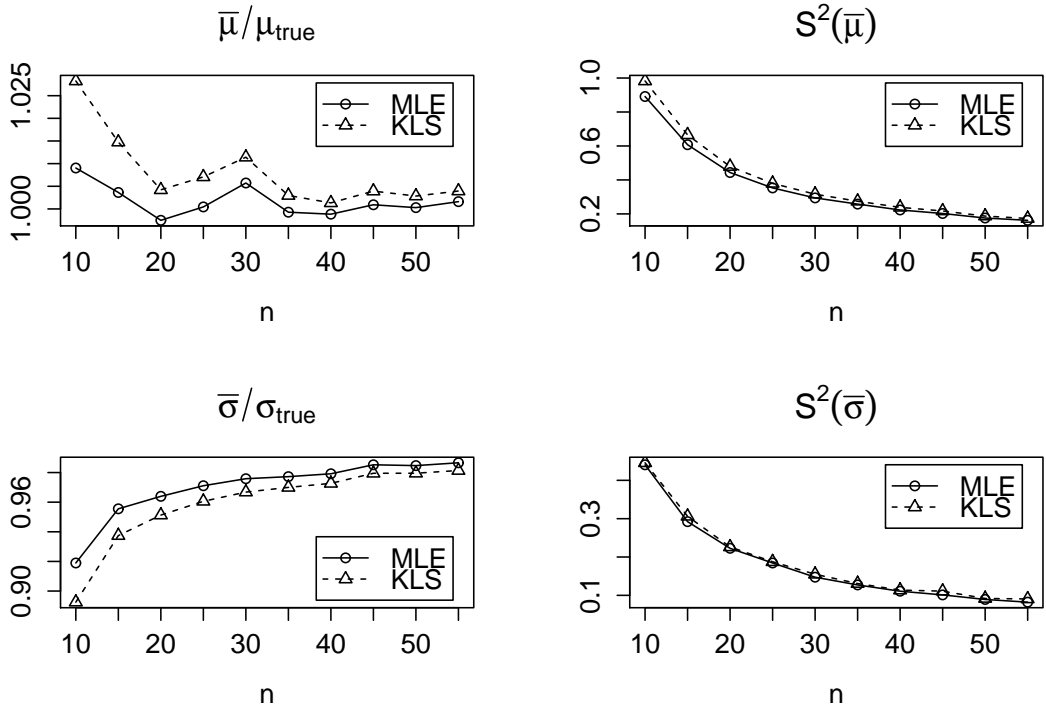}\\[0pt]}
\end{center}
\caption{$\bar{\mu}/\mu_{true}$, $S^{2}\left(  \bar{\mu}\right)  $,
$\bar{\sigma}/\sigma_{true}$ and $S^{2}\left(  \bar{\sigma}\right)  $ as
functions of sample size}%
\label{figNormal}%
\end{figure}
\end{example}

\begin{example}
\label{ex(5.2)}Let $\{X_{1},\ldots,X_{n}\}$ be sequence of $i.i.d.$ Laplace
random variables with probability density function
\[
f\left(  x;\theta\right)  =\frac{1}{\theta}\exp\left\vert \frac{x}{\theta
}\right\vert ,\text{ \ \ \ }x\in%
\mathbb{R}
.
\]

We simply have $MCKLE$ of $\theta$ as%
\[
\widehat{\theta}=\sqrt{\frac{\bar{X^{2}}}{2}}.
\]

For asymptotic properties of this estimator see Section
\ref{S4(Asymptotic properties of estimators)}.
\end{example}

\section{Asymptotic properties of estimators}

\label{S4(Asymptotic properties of estimators)}

In this section we study asymptotic properties of $MCKLE$'s. For this purpose,
first we give a brief review on $GEE$. Some related references on $GEE$ are
\cite{Huber:1964}, \citet[chapter 7]{Serfling:1980}, \cite{Qin:Lawless:1994},
\citet[chapter 5]{vanderVaart:2000}, \citet[chapter 14]{Pawitan:2001},
\citet[chapter 5]{Shao:2003}, \citet[chapter 3]{Huber:Ronchetti:2009} and
\cite{Hampel:et:al:2011}.

Throughout this section, we use the terminology used by \cite{Shao:2003}. We
assume that $X_{1},...,X_{n}$ represents independent (not necessarily
identically distributed) random vectors, in which the dimension of $X_{i}$ is
$d_{i},~i=1,...,n~\left(  \sup_{i}d_{i}<\infty\right)  $. We also assume that
in the population model the vector $\mbox{\boldmath
$\theta$}$ is a $k$-vector of unknown parameters. The $GEE$ method is a
general method in statistical inference for deriving point estimators. Let
$\mbox{\boldmath
$\Theta$}\subset%
\mathbb{R}
^{k}$ be the range of $\mbox{\boldmath
$\theta$}$, $\mbox{\boldmath
$\psi$}_{i}$ be a Borel function from $%
\mathbb{R}
^{d_{i}}\times\mbox{\boldmath
$\Theta$}$ to $%
\mathbb{R}
^{k}$, $i=1,...,n$, and%
\[
s_{n}(\mbox{\boldmath
$\gamma$})=\sum_{i=1}^{n}\mbox{\boldmath
$\psi$}_{i}\left(  X_{i},\mbox{\boldmath
$\gamma$}\right)  ,~\mbox{\boldmath
$\gamma$}\in\mbox{\boldmath
$\Theta$}.
\]

If $\widehat{\mbox{\boldmath
$\theta$}}\in\mbox{\boldmath
$\Theta$}$ is an estimator of $\mbox{\boldmath
$\theta$}$ which satisfies $s_{n}(\widehat{\mbox{\boldmath
$\theta$}})=0$, then $\widehat{\mbox{\boldmath
$\theta$}}$ is called a $GEE$ estimator. The equation $s_{n}\left(
\mbox{\boldmath
$\gamma$}\right)  =0$ is called a $GEE$. Most of the estimation methods such
as likelihood estimators, moment estimators and M-estimators are special cases
of $GEE$ estimators. Usually $GEE$'s are chosen such that%
\begin{equation}
E\left[  s_{n}\left(  \mbox{\boldmath
$\theta$}\right)  \right]  =\sum_{i=1}^{n}E\left[  \mbox{\boldmath
$\psi$}_{i}\left(  X_{i},\mbox{\boldmath
$\theta$}\right)  \right]  =0. \label{eq6.2}%
\end{equation}

If the exact expectation does not exist, then the expectation $E$ may be
replaced by an asymptotic expectation. The consistency and asymptotic
normality of the $GEE$ are studied by the authors under different conditions \citep[see, fore example][]{Shao:2003}.

\subsection{Consistency and asymptotic normality of the $MCKLE$}

Let $\widehat{\mbox{\boldmath $\theta$}}_{n}$ be $MCKLE$ by minimizing
$g\left(  \mbox{\boldmath
$\theta$}\right)  $ in \eqref{eq5.8.4} with $s\left(  x\right)  $ as defined
in \eqref{eq5.8.1}. Here, we show that the $MCKLE$'s are special cases of
$GEE$. Using this, we show consistency and asymptotic normality of $MCKLE$'s.

\begin{theorem}
\label{th(7.2)}$MCKLE$'s, by minimizing $g\left(  \mbox{\boldmath
$\theta$}\right)  $ in \eqref{eq5.8.4}, are special cases of $GEE$ estimators.
\end{theorem}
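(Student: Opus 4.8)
The plan is to exploit the elementary fact that an interior minimizer of a smooth objective is a stationary point, and then simply to read off the estimating function from the gradient of $g$. First I would invoke the standing assumptions stated just before the theorem, namely $E_{\mbox{\boldmath$\theta$}}\left\vert X\right\vert<\infty$ together with $g''(\mbox{\boldmath$\theta$})$ positive definite, so that the $MCKLE$ $\widehat{\mbox{\boldmath$\theta$}}_{n}$ lies in the interior of $\mbox{\boldmath$\Theta$}$ and therefore necessarily satisfies the first-order (stationarity) condition $\frac{\partial}{\partial\mbox{\boldmath$\theta$}}g(\widehat{\mbox{\boldmath$\theta$}}_{n})=\mathbf{0}$.

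Next I would differentiate the simple form \eqref{eq5.8.4} term by term. Since $g(\mbox{\boldmath$\theta$})=E_{\mbox{\boldmath$\theta$}}\left\vert X\right\vert-\frac{1}{n}\sum_{i=1}^{n}s(x_{i})$, interchanging the derivative with the finite sum yields
\[
\frac{\partial}{\partial\mbox{\boldmath$\theta$}}g(\mbox{\boldmath$\theta$})=\frac{1}{n}\sum_{i=1}^{n}\left\{\frac{\partial}{\partial\mbox{\boldmath$\theta$}}E_{\mbox{\boldmath$\theta$}}\left\vert X\right\vert-\frac{\partial}{\partial\mbox{\boldmath$\theta$}}s(x_{i};\mbox{\boldmath$\theta$})\right\},
\]
where the derivatives of the two pieces of $s$ come from the Leibniz rule applied to \eqref{eq2.5} and to the definition of $u$, giving $\frac{\partial}{\partial\mbox{\boldmath$\theta$}}h(x)=\int_{0}^{x}\frac{\partial_{\mbox{\boldmath$\theta$}}\bar{F}(y;\mbox{\boldmath$\theta$})}{\bar{F}(y;\mbox{\boldmath$\theta$})}dy$ and the analogous expression for $u$. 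I would then define, for each $i$,
\[
\mbox{\boldmath$\psi$}_{i}(X_{i},\mbox{\boldmath$\theta$})=\frac{\partial}{\partial\mbox{\boldmath$\theta$}}\left[E_{\mbox{\boldmath$\theta$}}\left\vert X\right\vert-s(X_{i};\mbox{\boldmath$\theta$})\right],
\]
which is a Borel function from $\mathbb{R}\times\mbox{\boldmath$\Theta$}$ to $\mathbb{R}^{k}$, and set $s_{n}(\mbox{\boldmath$\gamma$})=\sum_{i=1}^{n}\mbox{\boldmath$\psi$}_{i}(X_{i},\mbox{\boldmath$\gamma$})$. With this choice $s_{n}(\mbox{\boldmath$\gamma$})=n\,\frac{\partial}{\partial\mbox{\boldmath$\theta$}}g(\mbox{\boldmath$\gamma$})$, so the stationarity condition is precisely $s_{n}(\widehat{\mbox{\boldmath$\theta$}}_{n})=\mathbf{0}$, which identifies the $MCKLE$ as a $GEE$ estimator in the sense recalled above.

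To confirm that this is the ``usual'' kind of $GEE$, i.e.\ that the unbiasedness property \eqref{eq6.2} holds, I would verify $E_{\mbox{\boldmath$\theta$}}[\mbox{\boldmath$\psi$}_{i}(X_{i},\mbox{\boldmath$\theta$})]=\mathbf{0}$ at the true value. Writing $E_{\mbox{\boldmath$\theta$}_{0}}[s(X;\mbox{\boldmath$\theta$})]$ and applying Fubini's theorem to interchange the inner $dy$-integral defining $u,h$ with the expectation gives $\int_{-\infty}^{0}F(y;\mbox{\boldmath$\theta$}_{0})\log F(y;\mbox{\boldmath$\theta$})\,dy+\int_{0}^{\infty}\bar{F}(y;\mbox{\boldmath$\theta$}_{0})\log\bar{F}(y;\mbox{\boldmath$\theta$})\,dy$. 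Differentiating in $\mbox{\boldmath$\theta$}$ and evaluating at $\mbox{\boldmath$\theta$}=\mbox{\boldmath$\theta$}_{0}$ collapses each logarithmic factor and leaves $\int_{-\infty}^{0}\partial_{\mbox{\boldmath$\theta$}}F\,dy+\int_{0}^{\infty}\partial_{\mbox{\boldmath$\theta$}}\bar{F}\,dy$, which equals $\partial_{\mbox{\boldmath$\theta$}}E_{\mbox{\boldmath$\theta$}}\left\vert X\right\vert$ by differentiating the expression for $E_{\mbox{\boldmath$\theta$}}\left\vert X\right\vert$ given in Section~\ref{S3(Some Extensions)}; the two contributions cancel and \eqref{eq6.2} follows.

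The main obstacle I expect is analytic rather than structural: rigorously justifying the interchanges of differentiation with integration and the Fubini swap. One has to impose regularity (a dominating integrable envelope permitting the Leibniz rule for $\partial_{\mbox{\boldmath$\theta$}}F$ and $\partial_{\mbox{\boldmath$\theta$}}\bar{F}$, together with integrability of the score-type integrands $\partial_{\mbox{\boldmath$\theta$}}F/F$ and $\partial_{\mbox{\boldmath$\theta$}}\bar{F}/\bar{F}$ near the tails and near the origin) under which $\frac{\partial}{\partial\mbox{\boldmath$\theta$}}E_{\mbox{\boldmath$\theta$}}\left\vert X\right\vert$ and $\frac{\partial}{\partial\mbox{\boldmath$\theta$}}s$ exist and all the interchanges are legitimate. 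Once these conditions are in place, the purely algebraic identification of $\mbox{\boldmath$\psi$}_{i}$ from the gradient, and hence the conclusion of the theorem, is immediate.
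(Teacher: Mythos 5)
Your proposal is correct and follows essentially the same route as the paper: read off $\mbox{\boldmath$\psi$}(x,\mbox{\boldmath$\theta$})=\frac{\partial}{\partial\mbox{\boldmath$\theta$}}E_{\mbox{\boldmath$\theta$}}\left\vert X\right\vert-\frac{\partial}{\partial\mbox{\boldmath$\theta$}}s(x)$ from the stationarity condition for $g$, and verify \eqref{eq6.2} by showing $E\left[\frac{\partial}{\partial\mbox{\boldmath$\theta$}}s(X)\right]=\frac{\partial}{\partial\mbox{\boldmath$\theta$}}E_{\mbox{\boldmath$\theta$}}\left\vert X\right\vert$ via an interchange of the order of integration. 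The only difference is cosmetic (you apply Fubini to $E[s]$ and then differentiate, whereas the paper differentiates first) and you are somewhat more explicit than the paper about the regularity conditions needed to justify the interchanges.
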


\begin{proof} In order to
minimize $g\left( \mbox{\boldmath  $\theta$}\right) $ in \eqref{eq5.8.4}, we
get the derivative of $g\left( \mbox{\boldmath  $\theta$}\right) $, under the assumption that it exists,
\begin{equation*}
\frac{\partial }{\partial \mbox{\boldmath
$\theta$}}g\left( \mbox{\boldmath
$\theta$}\right) =\frac{\partial }{\partial \mbox{\boldmath
$\theta$}}E_{\mbox{\boldmath
$\theta$}}\left\vert X\right\vert -\frac{1}{n}\sum_{i=1}^{n}\frac{\partial }{%
\partial \mbox{\boldmath
$\theta$}}s\left( x_{i}\right) =0,
\end{equation*}%
which is equivalent to $GEE$ $s_{n}\left( \mbox{\boldmath
$\theta$}\right) =0$ where%
\begin{equation}
s_{n}\left( \mbox{\boldmath
$\theta$}\right) =\sum_{i=1}^{n}\left[ \frac{\partial }{\partial
\mbox{\boldmath
$\theta$}}E_{\mbox{\boldmath
$\theta$}}\left\vert X\right\vert -\frac{\partial }{\partial
\mbox{\boldmath
$\theta$}}s\left( x_{i}\right) \right] =\sum_{i=1}^{n}%
\mbox{\boldmath
$\psi$}\left( x_{i},\mbox{\boldmath
$\theta$}\right) ,  \label{eq7.9}
\end{equation}%
with%
\begin{equation}
\mbox{\boldmath
$\psi$}\left( x,\mbox{\boldmath
$\theta$}\right) =\frac{\partial }{\partial \mbox{\boldmath
$\theta$}}E_{\mbox{\boldmath
$\theta$}}\left\vert X\right\vert -\frac{\partial }{\partial
\mbox{\boldmath
$\theta$}}s\left( x\right) .  \label{eq7.9.1}
\end{equation}
We must prove that $E\left[ s_{n}\left( \mbox{\boldmath
$\theta$}\right) \right] =0$ or equivalently $E\left[
\mbox{\boldmath
$\psi$}\left( X,\mbox{\boldmath
$\theta$}\right) \right] =0$. We have
\begin{equation*}
E\left[ \mbox{\boldmath
$\psi$}\left( X,\mbox{\boldmath
$\theta$}\right) \right] =\frac{\partial }{\partial
\mbox{\boldmath
$\theta$}}E_{\mbox{\boldmath
$\theta$}}\left\vert X\right\vert -E\left[ \frac{\partial }{\partial
\mbox{\boldmath
$\theta$}}s\left( X\right) \right] .
\end{equation*}
So, it is enough to show that%
\begin{equation*}
E\left[ \frac{\partial }{\partial \mbox{\boldmath
$\theta$}}s\left( X\right) \right] =\frac{\partial }{\partial
\mbox{\boldmath
$\theta$}}E_{\mbox{\boldmath
$\theta$}}\left\vert X\right\vert .
\end{equation*}%
By simple algebra we have%
\begin{eqnarray}
E\left[ \frac{\partial }{\partial \mbox{\boldmath
$\theta$}}s\left( X\right) \right]  &=&\int\nolimits_{-\infty }^{0}\frac{%
\partial }{\partial \mbox{\boldmath
$\theta$}}u\left( x\right) f\left( x;\mbox{\boldmath
$\theta$}\right) dx+\int\nolimits_{0}^{\infty }\frac{\partial }{\partial
\mbox{\boldmath
$\theta$}}h\left( x\right) f\left( x;\mbox{\boldmath
$\theta$}\right) dx  \notag \\
&=&\frac{\partial }{\partial \mbox{\boldmath
$\theta$}}\left\{ \int\nolimits_{-\infty }^{0}F\left( y;%
\mbox{\boldmath
$\theta$}\right) dy+\int\nolimits_{0}^{\infty }\bar{F}\left( y;%
\mbox{\boldmath
$\theta$}\right) dy\right\}   \notag \\
&=&\frac{\partial }{\partial \mbox{\boldmath
$\theta$}}E_{\mbox{\boldmath
$\theta$}}\left\vert X\right\vert,  \label{eq7.1}
\end{eqnarray}%
which proves the result.
\end{proof}

\begin{corollary}
In special case that support of $X$ is $%
\mathbb{R}
^{+}$, $MCKLE$ is an special case of $GEE$ estimators, where%
\begin{equation}
s_{n}\left(  \mbox{\boldmath
$\theta$}\right)  =\sum_{i=1}^{n}\left[  \frac{\partial}{\partial
\mbox{\boldmath
$\theta$}}E_{\mbox{\boldmath
$\theta$}}\left(  X\right)  -\frac{\partial}{\partial\mbox{\boldmath
$\theta$}}h\left(  x_{i}\right)  \right]  =\sum_{i=1}^{n}\mbox{\boldmath
$\psi$}\left(  x_{i},\mbox{\boldmath
$\theta$}\right)  , \label{eq7.2}%
\end{equation}
with%
\begin{equation}
\mbox{\boldmath
$\psi$}\left(  x,\mbox{\boldmath
$\theta$}\right)  =\frac{\partial}{\partial\mbox{\boldmath
$\theta$}}E_{\mbox{\boldmath
$\theta$}}\left(  X\right)  -\frac{\partial}{\partial\mbox{\boldmath
$\theta$}}h\left(  x\right)  . \label{eq7.3}%
\end{equation}

\end{corollary}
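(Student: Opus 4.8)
The plan is to obtain this as an immediate specialization of Theorem \ref{th(7.2)} to the case $k=0$. When the support of $X$ is $\mathbb{R}^{+}$, every observation satisfies $x_{i}\geq 0$, so in the notation preceding \eqref{eq5.8.4} one always has $k=0$. First I would record the two simplifications that this forces: since $X$ is nonnegative, $E_{\mbox{\boldmath $\theta$}}\left\vert X\right\vert =E_{\mbox{\boldmath $\theta$}}\left( X\right)$, and since $F_{n}(x)=0$ for $x<0$ the integral $\int_{-\infty}^{0}F_{n}(x)\log F(x;\mbox{\boldmath $\theta$})\,dx$ appearing in \eqref{eq5.13} vanishes identically. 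Consequently the function $s(x)$ of \eqref{eq5.8.1} reduces to $h(x)$ on the relevant range, and $g(\mbox{\boldmath $\theta$})$ in \eqref{eq5.8.4} collapses to the form \eqref{eq2.4}, exactly as already observed in the text.

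Next I would substitute these reductions into the general $GEE$ produced by the theorem. Replacing $E_{\mbox{\boldmath $\theta$}}\left\vert X\right\vert$ by $E_{\mbox{\boldmath $\theta$}}\left( X\right)$ and $s(x)$ by $h(x)$ in \eqref{eq7.9} and \eqref{eq7.9.1} yields verbatim the expressions \eqref{eq7.2} and \eqref{eq7.3}. Hence the $MCKLE$ again solves $s_{n}(\mbox{\boldmath $\theta$})=0$ with the stated score function $\mbox{\boldmath $\psi$}$, establishing that it is a $GEE$ estimator.

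Finally I would verify the unbiasedness requirement $E[\mbox{\boldmath $\psi$}(X,\mbox{\boldmath $\theta$})]=0$. This can simply be read off from \eqref{eq7.1} by discarding the integral over $(-\infty,0)$, which is zero here; equivalently it follows directly from the identity $E(h(X))=\int_{0}^{\infty}\bar{F}(x;\mbox{\boldmath $\theta$})\log\bar{F}(x;\mbox{\boldmath $\theta$})\,dx$ cited earlier, upon differentiating and comparing with $\frac{\partial}{\partial\mbox{\boldmath $\theta$}}E_{\mbox{\boldmath $\theta$}}(X)=\frac{\partial}{\partial\mbox{\boldmath $\theta$}}\int_{0}^{\infty}\bar{F}(y;\mbox{\boldmath $\theta$})\,dy$. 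Because the statement is a pure specialization of an already-established result, I do not expect any genuine obstacle; the only point requiring mild care is the justification for interchanging differentiation and integration in $E\!\left[\frac{\partial}{\partial\mbox{\boldmath $\theta$}}h(X)\right]$, and this regularity is inherited unchanged from the proof of Theorem \ref{th(7.2)}.
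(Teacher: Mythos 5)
Your proposal is correct and matches the paper's intended argument: the corollary is stated without a separate proof precisely because it is the specialization of Theorem \ref{th(7.2)} to nonnegative support, where $E_{\mbox{\boldmath $\theta$}}\left\vert X\right\vert=E_{\mbox{\boldmath $\theta$}}\left(X\right)$ and $s(x)$ reduces to $h(x)$, which is exactly the reduction you carry out. Your additional check of $E\left[\mbox{\boldmath $\psi$}\left(X,\mbox{\boldmath $\theta$}\right)\right]=0$ via the restriction of \eqref{eq7.1} to $(0,\infty)$ is consistent with the paper and adds nothing beyond what the theorem already guarantees.
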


We now study other conditions under which $MCKLE$'s are consistent. For each
$n$, let $\widehat{\mbox{\boldmath
$\theta$}}_{n}$ be an $MCKLE$ or equivalently a $GEE$ estimator, i.e.,
$s_{n}\left(  \widehat{\mbox{\boldmath
$\theta$}}_{n}\right)  =0$, where $s_{n}\left(  \mbox{\boldmath
$\theta$}\right)  $ is defined as \eqref{eq7.9} or \eqref{eq7.2}.
In the next Theorem, we study the regular consistency of $\widehat
{\mbox{\boldmath
$\theta$}}_{n}$.

\begin{theorem}
\label{th(7.3)}For each $n$, let $\widehat{\mbox{\boldmath
$\theta$}}_{n}$ be an $MCKLE$ or equivalently a $GEE$ estimator. Suppose that
$\mbox{\boldmath
$\psi$}$ which is defined in \eqref{eq7.9.1} or \eqref{eq7.3} is a bounded and
continuous function of $\mbox{\boldmath
$\theta$}$. Let%
\[
\mbox{\boldmath
$\Psi$}\left(  \mbox{\boldmath
$\theta$}\right)  =E\left[  \mbox{\boldmath
$\psi$}\left(  X,\mbox{\boldmath
$\theta$}\right)  \right]  ,
\]
where we assume that $\mbox{\boldmath
$\Psi$}^{\prime}\left(  \mbox{\boldmath
$\theta$}\right)  $ exists and is full rank. Then $\widehat{\mbox{\boldmath
$\theta$}}_{n}\overset{p}{\rightarrow}\mbox{\boldmath
$\theta$}$.
\end{theorem}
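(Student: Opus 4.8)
The plan is to treat $\widehat{\mbox{\boldmath $\theta$}}_n$ as a Z-estimator (root of an estimating equation) and invoke the standard consistency argument for such roots, in the spirit of \citet[chapter 5]{Shao:2003} and \citet[chapter 5]{vanderVaart:2000}. Write $\bar{\mbox{\boldmath $\psi$}}_n(\mbox{\boldmath $\theta$}) = \frac{1}{n}\sum_{i=1}^n \mbox{\boldmath $\psi$}(X_i, \mbox{\boldmath $\theta$})$, so that the defining relation $s_n(\widehat{\mbox{\boldmath $\theta$}}_n)=0$ is equivalent to $\bar{\mbox{\boldmath $\psi$}}_n(\widehat{\mbox{\boldmath $\theta$}}_n)=0$. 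By Theorem \ref{th(7.2)} the true parameter already satisfies $\mbox{\boldmath $\Psi$}(\mbox{\boldmath $\theta$}) = E[\mbox{\boldmath $\psi$}(X, \mbox{\boldmath $\theta$})] = 0$; the goal is to show that any sample root must concentrate near this population root.

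First I would establish a pointwise weak law: since $\mbox{\boldmath $\psi$}$ is bounded, each coordinate of $\mbox{\boldmath $\psi$}(X, \mbox{\boldmath $\gamma$})$ is integrable for every fixed $\mbox{\boldmath $\gamma$}$, so the ordinary weak law of large numbers gives $\bar{\mbox{\boldmath $\psi$}}_n(\mbox{\boldmath $\gamma$}) \overset{p}{\rightarrow} \mbox{\boldmath $\Psi$}(\mbox{\boldmath $\gamma$})$, and dominated convergence (again using the bound) makes $\mbox{\boldmath $\Psi$}$ continuous. The crucial step is to upgrade this to uniform convergence over a compact neighbourhood $K$ of $\mbox{\boldmath $\theta$}$, i.e. $\sup_{\mbox{\boldmath $\gamma$}\in K}\|\bar{\mbox{\boldmath $\psi$}}_n(\mbox{\boldmath $\gamma$}) - \mbox{\boldmath $\Psi$}(\mbox{\boldmath $\gamma$})\| \overset{p}{\rightarrow} 0$. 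Boundedness supplies an integrable envelope and continuity in $\mbox{\boldmath $\gamma$}$ yields stochastic equicontinuity, so a Glivenko--Cantelli / uniform-law-of-large-numbers argument (covering $K$ by finitely many small balls and controlling the oscillation of $\mbox{\boldmath $\psi$}$ on each) delivers the uniform convergence.

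Next I would use the identifiability built into the hypotheses. Because $\mbox{\boldmath $\Psi$}(\mbox{\boldmath $\theta$})=0$ and $\mbox{\boldmath $\Psi$}'(\mbox{\boldmath $\theta$})$ exists and is full rank, the inverse-function theorem shows $\mbox{\boldmath $\Psi$}$ is locally a homeomorphism near $\mbox{\boldmath $\theta$}$; hence $\mbox{\boldmath $\theta$}$ is an isolated zero and, for each small $\epsilon>0$, $\inf\{\|\mbox{\boldmath $\Psi$}(\mbox{\boldmath $\gamma$})\| : \epsilon \le \|\mbox{\boldmath $\gamma$} - \mbox{\boldmath $\theta$}\| \le \delta\} =: c_\epsilon > 0$ on a suitable closed shell. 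Combining this well-separatedness with the uniform convergence finishes the argument: on the event $\{\sup_K \|\bar{\mbox{\boldmath $\psi$}}_n - \mbox{\boldmath $\Psi$}\| < c_\epsilon\}$, whose probability tends to one, any root $\widehat{\mbox{\boldmath $\theta$}}_n$ of $\bar{\mbox{\boldmath $\psi$}}_n$ lying in $K$ must satisfy $\|\widehat{\mbox{\boldmath $\theta$}}_n - \mbox{\boldmath $\theta$}\| < \epsilon$, which is exactly $\widehat{\mbox{\boldmath $\theta$}}_n \overset{p}{\rightarrow} \mbox{\boldmath $\theta$}$.

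I expect the uniform-convergence step to be the main obstacle: boundedness and pointwise continuity of $\mbox{\boldmath $\psi$}$ alone do not automatically force a uniform law, so one must either invoke equicontinuity and compactness explicitly or cite the precise GEE consistency theorem of \citet[chapter 5]{Shao:2003}. A secondary technical point is that the full-rank Jacobian yields only \emph{local} identifiability, so the argument is naturally confined to a compact neighbourhood of $\mbox{\boldmath $\theta$}$ (or requires the supplementary assumption that $\mbox{\boldmath $\theta$}$ is the unique zero of $\mbox{\boldmath $\Psi$}$) to rule out spurious distant roots.
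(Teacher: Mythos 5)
Your proposal is correct and follows essentially the same route as the paper: the paper's entire proof is a one-line appeal to Proposition 5.2 of \cite{Shao:2003} (the standard GEE/Z-estimator consistency theorem) together with the unbiasedness $E\left[ \mbox{\boldmath $\psi$}\left( X,\mbox{\boldmath $\theta$}\right) \right] =0$ established in Theorem \ref{th(7.2)}, and your argument is a self-contained reconstruction of exactly that proposition's proof (pointwise LLN, uniform convergence over a compact neighbourhood via the bounded envelope and continuity, and well-separation of the zero from the full-rank Jacobian). The two caveats you flag are precisely the points the cited proposition's hypotheses are designed to handle, so nothing is missing relative to what the paper itself does.
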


\begin{proof}The result follows from  Proposition 5.2 of \cite{Shao:2003}
using the fact that \eqref{eq6.2} holds.\end{proof}

Asymptotic normality of a consistent sequence of $MCKLE$'s can be established
under some conditions. We first consider the special case where
$\mbox{\boldmath
$\theta$}$ is scalar and $X_{1},...,X_{n}$ are $i.i.d.$ .

\begin{theorem}
\label{th(7.4)}For each $n$, let $\widehat{\mbox{\boldmath
$\theta$}}_{n}$ be an $MCKLE$ or equivalently a $GEE$ estimator. Then
\[
\sqrt{n}\left(  \widehat{\mbox{\boldmath
$\theta$}}_{n}-\mbox{\boldmath
$\theta$}\right)  \overset{d}{\rightarrow}N\left(  0,\sigma_{F}^{2}\right)  ,
\]
where $\sigma_{F}^{2}=A/B^{2}$, with%
\[
A=E\left[  \frac{\partial}{\partial\mbox{\boldmath
$\theta$}}s\left(  X\right)  \right]  ^{2}-\left[  \frac{\partial}%
{\partial\mbox{\boldmath
$\theta$}}E_{\mbox{\boldmath
$\theta$}}\left\vert X\right\vert \right]  ^{2},
\]
and
\[
B=\int\nolimits_{-\infty}^{0}\frac{\left[  \frac{\partial}{\partial
\mbox{\boldmath
$\theta$}}F\left(  x;\mbox{\boldmath
$\theta$}\right)  \right]  ^{2}}{F\left(  x;\mbox{\boldmath
$\theta$}\right)  }dx+\int\nolimits_{0}^{\infty}\frac{\left[  \frac{\partial
}{\partial\mbox{\boldmath
$\theta$}}\bar{F}\left(  x;\mbox{\boldmath
$\theta$}\right)  \right]  ^{2}}{\bar{F}\left(  x;\mbox{\boldmath
$\theta$}\right)  }dx.
\]

\end{theorem}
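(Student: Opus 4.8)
The plan is to derive the asymptotic normality from the general $GEE$ theory, since Theorem~\ref{th(7.2)} already places the $MCKLE$ in the $GEE$ framework with the estimating function $\mbox{\boldmath$\psi$}\left(x,\mbox{\boldmath$\theta$}\right)$ of \eqref{eq7.9.1}. The standard one-dimensional $M$-estimator central limit theorem (e.g. Theorem~5.4 in \cite{Shao:2003} or the classical result via a first-order Taylor expansion of $s_n$) gives, for a consistent root $\widehat{\mbox{\boldmath$\theta$}}_n$ of $s_n(\mbox{\boldmath$\theta$})=0$ with $i.i.d.$ data, the limit

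**Derivation of the two variance factors.** The numerator in the sandwich formula is the variance of the summand $\mbox{\boldmath$\psi$}\left(X,\mbox{\boldmath$\theta$}\right)$. By \eqref{eq7.9.1} this summand equals $\frac{\partial}{\partial\mbox{\boldmath$\theta$}}E_{\mbox{\boldmath$\theta$}}\left\vert X\right\vert-\frac{\partial}{\partial\mbox{\boldmath$\theta$}}s\left(X\right)$, and its mean is zero by the identity \eqref{eq7.1} established in the proof of Theorem~\ref{th(7.2)}. Hence its variance is just the second moment,
\[
\mathrm{Var}\!\left[\mbox{\boldmath$\psi$}\left(X,\mbox{\boldmath$\theta$}\right)\right]
=E\!\left[\frac{\partial}{\partial\mbox{\boldmath$\theta$}}s\left(X\right)\right]^{2}
-\left[\frac{\partial}{\partial\mbox{\boldmath$\theta$}}E_{\mbox{\boldmath$\theta$}}\left\vert X\right\vert\right]^{2},
\]
using $\frac{\partial}{\partial\mbox{\boldmath$\theta$}}E_{\mbox{\boldmath$\theta$}}\left\vert X\right\vert=E[\frac{\partial}{\partial\mbox{\boldmath$\theta$}}s(X)]$ again from \eqref{eq7.1}. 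This is exactly $A$. So the first task is simply to recognize $A$ as the variance of the score-like function, which \eqref{eq7.1} hands us for free.

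**The denominator is where the real work lies.** I must show that $B=-E\!\left[\frac{\partial}{\partial\mbox{\boldmath$\theta$}}\mbox{\boldmath$\psi$}\left(X,\mbox{\boldmath$\theta$}\right)\right]$, or equivalently $B=\frac{\partial}{\partial\mbox{\boldmath$\theta$}}E\!\left[\mbox{\boldmath$\psi$}\left(X,\mbox{\boldmath$\theta$}\right)\right]$ evaluated under the true parameter, reduces to the two integrals of $\left[\partial_{\mbox{\boldmath$\theta$}}F\right]^{2}/F$ and $\left[\partial_{\mbox{\boldmath$\theta$}}\bar F\right]^{2}/\bar F$. The plan is to start from $\mbox{\boldmath$\Psi$}(\mbox{\boldmath$\theta$})=\frac{\partial}{\partial\mbox{\boldmath$\theta$}}E_{\mbox{\boldmath$\theta$}}\left\vert X\right\vert-E\!\left[\frac{\partial}{\partial\mbox{\boldmath$\theta$}}s(X)\right]$, substitute the explicit forms $u(x)=\int_x^0\log F(y;\mbox{\boldmath$\theta$})\,dy$ and $h(x)=\int_0^x\log\bar F(y;\mbox{\boldmath$\theta$})\,dy$ from \eqref{eq2.5}, and differentiate once more with respect to $\mbox{\boldmath$\theta$}$. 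Writing $E[\partial_{\mbox{\boldmath$\theta$}}s(X)]$ as $\int_{-\infty}^0 F\,\partial_{\mbox{\boldmath$\theta$}}\log F\,dx+\int_0^\infty \bar F\,\partial_{\mbox{\boldmath$\theta$}}\log\bar F\,dx$ (the form appearing in \eqref{eq7.1}), the second differentiation will produce terms $\partial_{\mbox{\boldmath$\theta$}}F\cdot\partial_{\mbox{\boldmath$\theta$}}\log F=\left[\partial_{\mbox{\boldmath$\theta$}}F\right]^2/F$ together with a term in $\partial^2_{\mbox{\boldmath$\theta$}}F$; the expectation of the latter must cancel against $\partial^2_{\mbox{\boldmath$\theta$}}E_{\mbox{\boldmath$\theta$}}\left\vert X\right\vert$, leaving precisely the Fisher-information-like quantity $B$.

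**The main obstacle** will be justifying the two interchanges of differentiation and integration — once in forming $\mbox{\boldmath$\Psi$}'$ and once inside the integrals defining the expectations — and confirming that the boundary contributions at $0$ and at $\pm\infty$ vanish, so that the cross terms collapse cleanly into $B$. These are the usual regularity/dominated-convergence conditions; I would state them as standing assumptions (finiteness of $E_{\mbox{\boldmath$\theta$}}\left\vert X\right\vert$, smoothness of $F(\cdot;\mbox{\boldmath$\theta$})$ in $\mbox{\boldmath$\theta$}$, and integrability of the relevant derivatives) and then invoke the $i.i.d.$ $M$-estimator limit theorem with $A=\mathrm{Var}[\mbox{\boldmath$\psi$}]$ and $B=\mbox{\boldmath$\Psi$}'(\mbox{\boldmath$\theta$})$ to conclude $\sqrt{n}(\widehat{\mbox{\boldmath$\theta$}}_n-\mbox{\boldmath$\theta$})\overset{d}{\rightarrow}N(0,A/B^2)$.
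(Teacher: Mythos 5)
Your overall architecture matches the paper's: place the $MCKLE$ in the $M$-estimator/$GEE$ framework via Theorem~\ref{th(7.2)}, identify the sandwich ingredients as $A=\mathrm{Var}\left[\mbox{\boldmath$\psi$}\left(X,\mbox{\boldmath$\theta$}\right)\right]$ and $B=\mbox{\boldmath$\Psi$}^{\prime}\left(\mbox{\boldmath$\theta$}\right)$, and invoke the asymptotic normality theorem from \cite{Shao:2003} (the paper uses Theorem 5.13 there, not 5.4, but that is immaterial). Your derivation of $A$ is exactly the paper's and is correct.

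The computation of $B$ as you describe it, however, would fail. You propose to write $E\left[\frac{\partial}{\partial\theta}s\left(X\right)\right]=\int_{-\infty}^{0}F\,\frac{\partial}{\partial\theta}\log F\,dx+\int_{0}^{\infty}\bar{F}\,\frac{\partial}{\partial\theta}\log\bar{F}\,dx$ and differentiate again, keeping the product-rule term $\frac{\partial F}{\partial\theta}\cdot\frac{\partial}{\partial\theta}\log F=\left[\frac{\partial F}{\partial\theta}\right]^{2}/F$. But $F\,\frac{\partial}{\partial\theta}\log F=\frac{\partial F}{\partial\theta}$, so that representation is nothing but the identity \eqref{eq7.1}, $E\left[\frac{\partial}{\partial\theta}s\left(X\right)\right]=\frac{\partial}{\partial\theta}E_{\theta}\left\vert X\right\vert$, valid for every $\theta$; differentiating it totally gives $\frac{\partial^{2}}{\partial\theta^{2}}E_{\theta}\left\vert X\right\vert$ and hence $\mbox{\boldmath$\Psi$}^{\prime}\equiv0$. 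Equivalently, the other product-rule term is $F\,\frac{\partial^{2}}{\partial\theta^{2}}\log F=\frac{\partial^{2}F}{\partial\theta^{2}}-\left[\frac{\partial F}{\partial\theta}\right]^{2}/F$, not merely ``a term in $\frac{\partial^{2}F}{\partial\theta^{2}}$'', and it cancels the very cross term you want to keep. The underlying issue is that $E\left[\frac{\partial}{\partial\theta}\mbox{\boldmath$\psi$}\left(X,\theta\right)\right]$ and $\frac{d}{d\theta}E_{\theta}\left[\mbox{\boldmath$\psi$}\left(X,\theta\right)\right]$ are \emph{not} equivalent here: the latter also differentiates the density, and since $E_{\theta}\left[\mbox{\boldmath$\psi$}\left(X,\theta\right)\right]=0$ identically it vanishes. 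What the sandwich requires is the derivative of $\mbox{\boldmath$\psi$}$ in its parameter argument only, with the law of $X$ frozen at the true value. Concretely one must compute $E\left[\frac{\partial^{2}}{\partial\theta^{2}}s\left(X\right)\right]$ by interchanging the order of integration, $\int_{-\infty}^{0}\int_{x}^{0}\frac{\partial^{2}}{\partial\theta^{2}}\log F\left(y;\theta\right)dy\,f\left(x;\theta\right)dx=\int_{-\infty}^{0}F\left(y;\theta\right)\frac{\partial^{2}}{\partial\theta^{2}}\log F\left(y;\theta\right)dy$ (and likewise on $\left[0,\infty\right)$), and then expanding $\frac{\partial^{2}}{\partial\theta^{2}}\log F=\frac{\partial^{2}F/\partial\theta^{2}}{F}-\left[\frac{\partial F/\partial\theta}{F}\right]^{2}$; the $\int\frac{\partial^{2}F}{\partial\theta^{2}}$ pieces cancel against $\frac{\partial^{2}}{\partial\theta^{2}}E_{\theta}\left\vert X\right\vert$ and the $\left[\frac{\partial F}{\partial\theta}\right]^{2}/F$ pieces --- which arise from this expansion, not from a product rule on the weight --- survive with the right sign to give $\mbox{\boldmath$\Psi$}^{\prime}\left(\theta\right)=B$. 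This Fubini-plus-expansion step is the core of the paper's proof and is the piece missing from, and misstated in, your sketch. (Your sign in ``$B=-E\left[\frac{\partial}{\partial\theta}\mbox{\boldmath$\psi$}\right]$'' is also off --- one gets $B=+E\left[\frac{\partial}{\partial\theta}\mbox{\boldmath$\psi$}\right]>0$ --- though this is harmless since only $B^{2}$ enters the variance.)
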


\begin{proof}
Using Theorem \ref{th(7.2)} we have $E\left[  \mbox{\boldmath
$\psi$}\left(  X,\mbox{\boldmath
$\theta$}\right)  \right]  =0$. So if consider $\mbox{\boldmath
$\psi$}$ defined in \eqref{eq7.9.1}
\begin{align*}
E\left[  \mbox{\boldmath
$\psi$}\left(  X,\mbox{\boldmath
$\theta$}\right)  \right]  ^{2}  & =Var\left[  \mbox{\boldmath
$\psi$}\left(  X,\mbox{\boldmath
$\theta$}\right)  \right]  \\
& =Var\left[  \frac{\partial}{\partial\mbox{\boldmath
$\theta$}}E_{\mbox{\boldmath
$\theta$}}\left\vert X\right\vert -\frac{\partial}{\partial\mbox{\boldmath
$\theta$}}s\left(  X\right)  \right]  \\
& =Var\left[  \frac{\partial}{\partial\mbox{\boldmath
$\theta$}}s\left(  X\right)  \right]  \\
& =E\left[  \frac{\partial}{\partial\mbox{\boldmath
$\theta$}}s\left(  X\right)  \right]  ^{2}-E^{2}\left[  \frac{\partial
}{\partial\mbox{\boldmath
$\theta$}}s\left(  X\right)  \right]  \\
& =E\left[  \frac{\partial}{\partial\mbox{\boldmath
$\theta$}}s\left(  X\right)  \right]  ^{2}-\left[  \frac{\partial}%
{\partial\mbox{\boldmath
$\theta$}}E_{\mbox{\boldmath
$\theta$}}\left\vert X\right\vert \right]  ^{2},
\end{align*}
where the last equality follows from \eqref{eq7.1}. On the other hand%
\[
\mbox{\boldmath
$\Psi$}^{\prime}\left(  \mbox{\boldmath
$\theta$}\right)  =\frac{\partial^{2}}{\partial\mbox{\boldmath
$\theta$}^{2}}E_{\mbox{\boldmath
$\theta$}}\left\vert X\right\vert -E\left[  \frac{\partial^{2}}{\partial
\mbox{\boldmath
$\theta$}^{2}}s\left(  X\right)  \right]  ,
\]
and%
\begin{align*}
E\left[  \frac{\partial^{2}}{\partial\mbox{\boldmath
$\theta$}^{2}}s\left(  X\right)  \right]    & =\int_{-\infty}^{0}%
\int\nolimits_{x}^{0}\frac{\partial^{2}}{\partial\mbox{\boldmath
$\theta$}^{2}}\log F\left(  y;\mbox{\boldmath
$\theta$}\right)  dyf\left(  x;\mbox{\boldmath
$\theta$}\right)  dx+\int\nolimits_{0}^{\infty}\int\nolimits_{0}^{x}%
\frac{\partial^{2}}{\partial\mbox{\boldmath
$\theta$}^{2}}\log\bar{F}\left(  y;\mbox{\boldmath
$\theta$}\right)  dyf\left(  x;\mbox{\boldmath
$\theta$}\right)  dx\\
& =\int_{-\infty}^{0}\left\{  \frac{\frac{\partial^{2}}{\partial
\mbox{\boldmath
$\theta$}^{2}}F\left(  y;\mbox{\boldmath
$\theta$}\right)  }{F\left(  y;\mbox{\boldmath
$\theta$}\right)  }-\left[  \frac{\frac{\partial}{\partial\mbox{\boldmath
$\theta$}}F\left(  y;\mbox{\boldmath
$\theta$}\right)  }{F\left(  y;\mbox{\boldmath
$\theta$}\right)  }\right]  ^{2}\right\}  F\left(  y;\mbox{\boldmath
$\theta$}\right)  dy\\
& +\int\nolimits_{0}^{\infty}\left\{  \frac{\frac{\partial^{2}}{\partial
\mbox{\boldmath
$\theta$}^{2}}\bar{F}\left(  y;\mbox{\boldmath
$\theta$}\right)  }{\bar{F}\left(  y;\mbox{\boldmath
$\theta$}\right)  }-\left[  \frac{\frac{\partial}{\partial\mbox{\boldmath
$\theta$}}\bar{F}\left(  y;\mbox{\boldmath
$\theta$}\right)  }{\bar{F}\left(  y;\mbox{\boldmath
$\theta$}\right)  }\right]  ^{2}\right\}  \bar{F}\left(  y;\mbox{\boldmath
$\theta$}\right)  dy\\
& =\frac{\partial^{2}}{\partial\mbox{\boldmath
$\theta$}^{2}}\int_{-\infty}^{0}F\left(  x;\mbox{\boldmath
$\theta$}\right)  dx-\int_{-\infty}^{0}\frac{\left[  \frac{\partial}%
{\partial\mbox{\boldmath
$\theta$}}F\left(  x;\mbox{\boldmath
$\theta$}\right)  \right]  ^{2}}{F\left(  x;\mbox{\boldmath
$\theta$}\right)  }dx\\
& +\frac{\partial^{2}}{\partial\mbox{\boldmath
$\theta$}^{2}}\int\nolimits_{0}^{\infty}\bar{F}\left(  x;\mbox{\boldmath
$\theta$}\right)  dx-\int\nolimits_{0}^{\infty}\frac{\left[  \frac{\partial
}{\partial\mbox{\boldmath
$\theta$}}\bar{F}\left(  x;\mbox{\boldmath
$\theta$}\right)  \right]  ^{2}}{\bar{F}\left(  x;\mbox{\boldmath
$\theta$}\right)  }dx\\
& =\frac{\partial^{2}}{\partial\mbox{\boldmath
$\theta$}^{2}}E_{\mbox{\boldmath
$\theta$}}\left\vert X\right\vert -\int_{-\infty}^{0}\frac{\left[
\frac{\partial}{\partial\mbox{\boldmath
$\theta$}}F\left(  x;\mbox{\boldmath
$\theta$}\right)  \right]  ^{2}}{F\left(  x;\mbox{\boldmath
$\theta$}\right)  }dx-\int\nolimits_{0}^{\infty}\frac{\left[  \frac{\partial
}{\partial\mbox{\boldmath
$\theta$}}\bar{F}\left(  x;\mbox{\boldmath
$\theta$}\right)  \right]  ^{2}}{\bar{F}\left(  x;\mbox{\boldmath
$\theta$}\right)  }dx.
\end{align*}
So%
\[
\mbox{\boldmath
$\Psi$}^{\prime}\left(  \mbox{\boldmath
$\theta$}\right)  =\int_{-\infty}^{0}\frac{\left[  \frac{\partial}%
{\partial\mbox{\boldmath
$\theta$}}F\left(  x;\mbox{\boldmath
$\theta$}\right)  \right]  ^{2}}{F\left(  x;\mbox{\boldmath
$\theta$}\right)  }dx+\int\nolimits_{0}^{\infty}\frac{\left[  \frac{\partial
}{\partial\mbox{\boldmath
$\theta$}}\bar{F}\left(  x;\mbox{\boldmath
$\theta$}\right)  \right]  ^{2}}{\bar{F}\left(  x;\mbox{\boldmath
$\theta$}\right)  }dx.
\]
Now, using Theorem 5.13 of \cite{Shao:2003}, $\sigma_{F}^{2}$ will be found.
\end{proof}

The next Theorem shows asymptotic normality of $MCKLE$'s, when
$\mbox{\boldmath
$\theta$}$ is vector and $X_{1},...,X_{n}$ are $i.i.d.$ .

\begin{theorem}
\label{th(7.5)}Under the conditions of Theorem 5.14 of \cite{Shao:2003},%
\[
V_{n}^{-1/2}\left(  \widehat{\mbox{\boldmath
$\theta$}}_{n}-\mbox{\boldmath
$\theta$}\right)  \overset{d}{\rightarrow}N_{k}\left(  0,I_{k}\right)  ,
\]
where $V_{n}=\frac{1}{n}B^{-1}AB^{-1}$ with%
\[
A=\left[  \frac{\partial}{\partial\mbox{\boldmath
$\theta$}}s\left(  X\right)  \right]  \left[  \frac{\partial}{\partial
\mbox{\boldmath
$\theta$}}s\left(  X\right)  \right]  ^{T}-\left[  \frac{\partial}%
{\partial\mbox{\boldmath
$\theta$}}E_{\mbox{\boldmath
$\theta$}}\left\vert X\right\vert \right]  \left[  \frac{\partial}%
{\partial\mbox{\boldmath
$\theta$}}E_{\mbox{\boldmath
$\theta$}}\left\vert X\right\vert \right]  ^{T},
\]
and%
\[
B=\int\nolimits_{-\infty}^{0}\frac{\left[  \frac{\partial}{\partial
\mbox{\boldmath
$\theta$}}F\left(  x;\mbox{\boldmath
$\theta$}\right)  \right]  \left[  \frac{\partial}{\partial\mbox{\boldmath
$\theta$}}F\left(  x;\mbox{\boldmath
$\theta$}\right)  \right]  ^{T}}{F\left(  x;\mbox{\boldmath
$\theta$}\right)  }dx+\int\nolimits_{0}^{\infty}\frac{\left[  \frac{\partial
}{\partial\mbox{\boldmath
$\theta$}}\bar{F}\left(  x;\mbox{\boldmath
$\theta$}\right)  \right]  \left[  \frac{\partial}{\partial\mbox{\boldmath
$\theta$}}\bar{F}\left(  x;\mbox{\boldmath
$\theta$}\right)  \right]  ^{T}}{\bar{F}\left(  x;\mbox{\boldmath
$\theta$}\right)  }dx,
\]
provided that $B$ is invertible matrix.
\end{theorem}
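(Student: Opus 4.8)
The plan is to mirror the proof of the scalar case in Theorem \ref{th(7.4)}, replacing variances and second derivatives by their matrix analogues, and then to invoke Theorem 5.14 of \cite{Shao:2003} to read off the sandwich variance. Since $\widehat{\theta}_n$ is a $GEE$ estimator with estimating function $\psi$ as in \eqref{eq7.9.1}, that theorem supplies asymptotic normality of the form $V_n^{-1/2}(\widehat{\theta}_n-\theta)\to N_k(0,I_k)$ with $V_n=n^{-1}B^{-1}AB^{-1}$, once we identify $A$ as the covariance matrix of $\psi(X,\theta)$ and $B$ as the Jacobian $\Psi'(\theta)=\frac{\partial}{\partial\theta}E[\psi(X,\theta)]$. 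Thus the task reduces to computing these two $k\times k$ matrices explicitly and confirming that $B$ is invertible.

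First I would compute $A$. By Theorem \ref{th(7.2)} we have $E[\psi(X,\theta)]=0$, so the covariance matrix of $\psi$ is $A=E[\psi\,\psi^T]$. Because $\psi(X,\theta)=\partial_\theta E_\theta|X|-\partial_\theta s(X)$ and the first term is a nonrandom $k$-vector, $\mathrm{Cov}(\psi)=\mathrm{Cov}(\partial_\theta s(X))=E\{[\partial_\theta s(X)][\partial_\theta s(X)]^T\}-[\partial_\theta E_\theta|X|][\partial_\theta E_\theta|X|]^T$, where the reduction of the cross term uses the multivariate form of the identity \eqref{eq7.1}, namely $E[\partial_\theta s(X)]=\partial_\theta E_\theta|X|$. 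This is exactly the matrix $A$ in the statement (the leading term being understood as an expectation, as in the scalar $A$ of Theorem \ref{th(7.4)}).

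Next I would compute $B=\Psi'(\theta)$, writing $\Psi'(\theta)=\frac{\partial^2}{\partial\theta^2}E_\theta|X|-E[\frac{\partial^2}{\partial\theta^2}s(X)]$ with $\frac{\partial^2}{\partial\theta^2}$ now denoting a Hessian. Following Theorem \ref{th(7.4)} step by step, I would split $E[\partial^2_\theta s(X)]$ over $(-\infty,0)$ and $(0,\infty)$; a change in the order of the double integration (Fubini) converts the density-weighted inner integrals into $F$- and $\bar F$-weighted integrals of $\partial^2_\theta\log F$ and $\partial^2_\theta\log\bar F$. Applying the matrix log-derivative identity $\partial^2_\theta\log F=\frac{\partial^2_\theta F}{F}-\frac{(\partial_\theta F)(\partial_\theta F)^T}{F^2}$ (and its analogue for $\bar F$) and then pulling $\partial^2_\theta$ back outside $\int F$ and $\int\bar F$, the Hessian-of-$E_\theta|X|$ contributions cancel, leaving $E[\partial^2_\theta s(X)]=\partial^2_\theta E_\theta|X|-B$ with $B$ the stated sum of outer-product integrals. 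Hence $\Psi'(\theta)=B$, and substituting $A$ and $B$ into Shao's sandwich formula $V_n=n^{-1}B^{-1}AB^{-1}$ (a symmetric matrix, since each outer-product integrand is symmetric) completes the proof.

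The main obstacle, as in the scalar case, is analytic rather than algebraic: one must verify that the regularity hypotheses of Theorem 5.14 of \cite{Shao:2003} hold and, in particular, that the domination conditions justify the two interchanges in the computation of $B$ (the Fubini swap producing the $F$- and $\bar F$-weightings, and pulling $\partial^2_\theta$ through $\int F$ and $\int\bar F$), together with the invertibility of $B$, which is assumed in the statement. Granting those conditions, the matrix computation of $B$ is the only genuinely new ingredient beyond Theorem \ref{th(7.4)}, and it goes through verbatim with scalar squares replaced by outer products.
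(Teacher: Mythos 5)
Your proposal is correct and follows exactly the route the paper intends: the paper's own proof of Theorem \ref{th(7.5)} consists of the single line that it is similar to that of Theorem \ref{th(7.4)}, and what you have written is precisely that scalar argument carried over with variances replaced by covariance matrices, squares by outer products, and Theorem 5.13 of \cite{Shao:2003} by Theorem 5.14. Your observation that the leading term of $A$ must be read as an expectation (as in the scalar case) is a fair and correct reading of the statement.
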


\begin{proof}The proof is similar to that of Theorem \ref{th(7.4)}.%
\end{proof}

\begin{remark}
In Theorems \ref{th(7.4)} and \ref{th(7.5)}, for special case that support of
$X$ is $%
\mathbb{R}
^{+}$, $A$ and $B$ are given, respectively, by%
\[
A=E\left[  \frac{\partial}{\partial\mbox{\boldmath
$\theta$}}h\left(  X\right)  \right]  ^{2}-\left[  \frac{\partial}%
{\partial\mbox{\boldmath
$\theta$}}E_{\mbox{\boldmath
$\theta$}}\left(  X\right)  \right]  ^{2},
\]%
\[
B=\int\nolimits_{0}^{\infty}\frac{\left[  \frac{\partial}{\partial
\mbox{\boldmath
$\theta$}}\bar{F}\left(  x;\mbox{\boldmath
$\theta$}\right)  \right]  ^{2}}{\bar{F}\left(  x;\mbox{\boldmath
$\theta$}\right)  }dx,
\]
and%
\[
A=E\left[  \frac{\partial}{\partial\mbox{\boldmath
$\theta$}}h\left(  X\right)  \right]  \left[  \frac{\partial}{\partial
\mbox{\boldmath
$\theta$}}h\left(  X\right)  \right]  ^{T}-\left[  \frac{\partial}%
{\partial\mbox{\boldmath
$\theta$}}E_{\mbox{\boldmath
$\theta$}}\left(  X\right)  \right]  \left[  \frac{\partial}{\partial
\mbox{\boldmath
$\theta$}}E_{\mbox{\boldmath
$\theta$}}\left(  X\right)  \right]  ^{T},
\]%
\[
B=\int\nolimits_{0}^{\infty}\frac{\left[  \frac{\partial}{\partial
\mbox{\boldmath
$\theta$}}\bar{F}\left(  x;\mbox{\boldmath
$\theta$}\right)  \right]  \left[  \frac{\partial}{\partial\mbox{\boldmath
$\theta$}}\bar{F}\left(  x;\mbox{\boldmath
$\theta$}\right)  \right]  ^{T}}{\bar{F}\left(  x;\mbox{\boldmath
$\theta$}\right)  }dx.
\]

\end{remark}

Now, following \cite{Pawitan:2001}, we can find sample version of the variance
formula for the $MCKLE$ as follows. Given $x_{1},...,x_{n}$ let%
\begin{equation}
J=\widehat{E}\left[  \mbox{\boldmath
$\psi$}\left(  X,\mbox{\boldmath
$\theta$}\right)  \right]  ^{2}=\frac{1}{n}\sum_{i=1}^{n}\mbox{\boldmath
$\psi$}^{2}\left(  x_{i},\widehat{\mbox{\boldmath
$\theta$}}\right)  , \label{eq7.37}%
\end{equation}
where in the vector case we would simply use $\mbox{\boldmath
$\psi$}\left(  x_{i},\widehat{\mbox{\boldmath
$\theta$}}\right)  \mbox{\boldmath
$\psi$}^{T}\left(  x_{i},\widehat{\mbox{\boldmath
$\theta$}}\right)  $ in the summation, and%
\begin{equation}
I=-\widehat{E}\frac{\partial}{\partial\mbox{\boldmath
$\theta$}}\mbox{\boldmath
$\psi$}\left(  X,\mbox{\boldmath
$\theta$}\right)  =-\frac{1}{n}\sum_{i=1}^{n}\frac{\partial}{\partial
\mbox{\boldmath
$\theta$}}\mbox{\boldmath
$\psi$}\left(  x_{i},\widehat{\mbox{\boldmath
$\theta$}}\right)  . \label{eq7.38}%
\end{equation}

Then, we have the following result.

\begin{theorem}
\label{th(7.6)}Using notations defined in \eqref{eq7.37} and \eqref{eq7.38},
and an application of Slutsky's Theorem yields
\[
\widehat{V}_{n}^{-1/2}\left(  \widehat{\mbox{\boldmath
$\theta$}}_{n}-\mbox{\boldmath
$\theta$}\right)  \overset{d}{\rightarrow}N_{k}\left(  0,I_{k}\right)  ,
\]
where%
\begin{equation}
\widehat{V}_{n}=\frac{1}{n}I^{-1}JI^{-1}, \label{eq7.40.1}%
\end{equation}
provided that $I$ is invertible matrix, or equivalently $g\left(
\mbox{\boldmath
$\theta$}\right)  $ has infimum value on parameter space $\Theta$.
\end{theorem}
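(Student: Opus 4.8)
The plan is to read Theorem \ref{th(7.6)} as a Slutsky-type refinement of Theorem \ref{th(7.5)}: in the standardization $V_{n}=\frac{1}{n}B^{-1}AB^{-1}$ the unknown population matrices $A$ and $B$ are replaced by the data-based quantities $J$ and $I$ of \eqref{eq7.37} and \eqref{eq7.38}, and one must check that this substitution does not disturb the limiting law. The starting point is the conclusion of Theorem \ref{th(7.5)}, $V_{n}^{-1/2}(\widehat{\mbox{\boldmath $\theta$}}_{n}-\mbox{\boldmath $\theta$})\overset{d}{\rightarrow}N_{k}(0,I_{k})$. Writing $\widehat{V}_{n}^{-1/2}(\widehat{\mbox{\boldmath $\theta$}}_{n}-\mbox{\boldmath $\theta$})=\bigl(\widehat{V}_{n}^{-1/2}V_{n}^{1/2}\bigr)\,V_{n}^{-1/2}(\widehat{\mbox{\boldmath $\theta$}}_{n}-\mbox{\boldmath $\theta$})$, it suffices to prove that $\widehat{V}_{n}^{-1/2}V_{n}^{1/2}\overset{p}{\rightarrow}I_{k}$, after which Slutsky's Theorem yields the stated result.

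First I would establish the moment convergences at the true parameter. Since Theorem \ref{th(7.2)} gives $E[\mbox{\boldmath $\psi$}(X,\mbox{\boldmath $\theta$})]=0$, the identity $A=\mathrm{Var}[\mbox{\boldmath $\psi$}(X,\mbox{\boldmath $\theta$})]=E[\mbox{\boldmath $\psi$}(X,\mbox{\boldmath $\theta$})\mbox{\boldmath $\psi$}^{T}(X,\mbox{\boldmath $\theta$})]$ computed in the proof of Theorem \ref{th(7.4)} shows that $A$ is a genuine second moment of $\mbox{\boldmath $\psi$}$; the weak law of large numbers then gives $\frac{1}{n}\sum_{i=1}^{n}\mbox{\boldmath $\psi$}(x_{i},\mbox{\boldmath $\theta$})\mbox{\boldmath $\psi$}^{T}(x_{i},\mbox{\boldmath $\theta$})\overset{p}{\rightarrow}A$. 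Likewise, the computation inside the proof of Theorem \ref{th(7.4)} identifies $\mbox{\boldmath $\Psi$}^{\prime}(\mbox{\boldmath $\theta$})=E\bigl[\frac{\partial}{\partial\mbox{\boldmath $\theta$}}\mbox{\boldmath $\psi$}(X,\mbox{\boldmath $\theta$})\bigr]=B$, so the weak law of large numbers gives $-\frac{1}{n}\sum_{i=1}^{n}\frac{\partial}{\partial\mbox{\boldmath $\theta$}}\mbox{\boldmath $\psi$}(x_{i},\mbox{\boldmath $\theta$})\overset{p}{\rightarrow}-B$. Thus $J$ and $I$ are consistent for $A$ and for $B$ up to sign, the sign being immaterial below because $I$ enters $\widehat{V}_{n}$ quadratically.

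The main obstacle is that $J$ and $I$ are evaluated at the random estimator $\widehat{\mbox{\boldmath $\theta$}}_{n}$, not at the fixed true value $\mbox{\boldmath $\theta$}$, so the plain law of large numbers does not apply directly. To bridge this gap I would combine the consistency $\widehat{\mbox{\boldmath $\theta$}}_{n}\overset{p}{\rightarrow}\mbox{\boldmath $\theta$}$ from Theorem \ref{th(7.3)} with the continuity of $\mbox{\boldmath $\psi$}$ and of $\frac{\partial}{\partial\mbox{\boldmath $\theta$}}\mbox{\boldmath $\psi$}$ in $\mbox{\boldmath $\theta$}$. A local uniform law of large numbers over a neighbourhood of $\mbox{\boldmath $\theta$}$ (equivalently, an equicontinuity/dominated-convergence bound controlling $\frac{1}{n}\sum_{i}[\phi(x_{i},\widehat{\mbox{\boldmath $\theta$}}_{n})-\phi(x_{i},\mbox{\boldmath $\theta$})]$ for $\phi=\mbox{\boldmath $\psi$}\mbox{\boldmath $\psi$}^{T}$ and $\phi=\frac{\partial}{\partial\mbox{\boldmath $\theta$}}\mbox{\boldmath $\psi$}$) then transfers the two displays of the previous paragraph from the fixed argument to the random one, yielding $J\overset{p}{\rightarrow}A$ and $I\overset{p}{\rightarrow}-B$. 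This is precisely the step that uses the boundedness and continuity hypotheses already imposed on $\mbox{\boldmath $\psi$}$ in Theorem \ref{th(7.3)}.

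Finally, I would assemble the pieces. Because the factor $\frac{1}{n}$ cancels between $\widehat{V}_{n}=\frac{1}{n}I^{-1}JI^{-1}$ and $V_{n}=\frac{1}{n}B^{-1}AB^{-1}$, it is enough to compare $n\widehat{V}_{n}=I^{-1}JI^{-1}$ with $nV_{n}=B^{-1}AB^{-1}$. By continuity of matrix inversion and of the symmetric square root on the cone of positive definite matrices, $J\overset{p}{\rightarrow}A$ and $I\overset{p}{\rightarrow}-B$ give $n\widehat{V}_{n}\overset{p}{\rightarrow}(-B)^{-1}A(-B)^{-1}=B^{-1}AB^{-1}=nV_{n}$, whence $\widehat{V}_{n}^{-1/2}V_{n}^{1/2}\overset{p}{\rightarrow}I_{k}$. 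Slutsky's Theorem applied to the decomposition of the first paragraph then delivers $\widehat{V}_{n}^{-1/2}(\widehat{\mbox{\boldmath $\theta$}}_{n}-\mbox{\boldmath $\theta$})\overset{d}{\rightarrow}N_{k}(0,I_{k})$. The proviso that $I$ be invertible for large $n$—equivalently, that $g(\mbox{\boldmath $\theta$})$ attain its infimum on $\mathbf{\Theta}$—is exactly what guarantees that $\widehat{V}_{n}^{-1/2}$ in \eqref{eq7.40.1} is well defined, closing the argument.
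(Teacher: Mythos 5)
Your argument is correct and follows exactly the route the paper intends: the paper offers no written proof of Theorem \ref{th(7.6)} beyond the phrase ``an application of Slutsky's Theorem'' embedded in the statement itself, and your decomposition $\widehat{V}_{n}^{-1/2}(\widehat{\mbox{\boldmath $\theta$}}_{n}-\mbox{\boldmath $\theta$})=\bigl(\widehat{V}_{n}^{-1/2}V_{n}^{1/2}\bigr)V_{n}^{-1/2}(\widehat{\mbox{\boldmath $\theta$}}_{n}-\mbox{\boldmath $\theta$})$ together with $n\widehat{V}_{n}=I^{-1}JI^{-1}\overset{p}{\rightarrow}B^{-1}AB^{-1}=nV_{n}$ is precisely that application. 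If anything, you supply more detail than the paper does --- in particular the local uniform law of large numbers needed because $J$ and $I$ are evaluated at $\widehat{\mbox{\boldmath $\theta$}}_{n}$ rather than at $\mbox{\boldmath $\theta$}$, and the observation that the sign of $I$ (which tends to $-B$) is immaterial since it enters $\widehat{V}_{n}$ quadratically.
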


In Theorems \ref{th(7.4)} and \ref{th(7.5)}, the estimator $\widehat{V}_{n}$
is a sample version of $V_{n}$, see also \cite{Basu:Lindsay:1994}. It is also
known that the sample variance \eqref{eq7.40.1} is a robust estimation which
is known as the 'sandwich' estimator, with $I^{-1}$ as the bread and $J$ the
filling \citep[see,][]{Huber:1967}. In likelihood approach, the quantity $I$
is the usual observed Fisher information.

\begin{example}
\label{ex(7.1)}Let $\{X_{1},\ldots,X_{n}\}$ be sequence of $i.i.d.$
exponential random variables with probability density function
\[
f\left(  x;\lambda\right)  =\lambda e^{-\lambda x},\text{ \ \ \ }%
x>0,~\lambda>0.
\]

We simply have $MCKLE$ of $\lambda$ as%
\[
\widehat{\lambda}=\sqrt{\frac{2}{\bar{X^{2}}}}.
\]

This estimator is function of linear combinations of $x_{i}^{2}$'s, and so by
strong law of large numbers (SLLN), $\widehat{\lambda}$ is strongly consistent
for $\lambda$, as well as the $MME$ of $\lambda$.

Now, by CLT and delta method or using Theorem \ref{th(7.4)}, one can show that%
\[
\sqrt{n}\left(  \widehat{\lambda}-\lambda\right)  \overset{d}{\rightarrow
}N\left(  0,\frac{5\lambda^{2}}{4}\right)  ,
\]
and $n^{-1}$ order asymptotic bias of $\widehat{\lambda}$\ is $15\lambda/8n$.
It is well known that the $MLE$ of $\lambda$ is $\widehat{\lambda}_{m}%
=1/\bar{X}$ with asymptotic distribution%
\[
\sqrt{n}\left(  \widehat{\lambda}_{m}-\lambda\right)  \overset{d}{\rightarrow
}N\left(  0,\lambda^{2}\right)  ,
\]
and $n^{-1}$ order asymptotic bias of $\widehat{\lambda}_{m}$\ is $\lambda/n$.

Notice that using asymptotic bias of $\widehat{\lambda}$, we can find some
unbiasing factors to improve our estimator. Since the $MLE$ has inverse Gamma
distribution, the unbiased estimator of $\lambda$ is $\widehat{\lambda}%
_{um}=\left(  n-1\right)  /n\bar{X}$ \citep[see,][]{Forbes:et:al:2011}. In Liu
approach an approximately unbiased estimator of $\lambda$ is%
\begin{equation}
\widehat{\lambda}_{u}=\frac{8n}{8n+15}\sqrt{\frac{2}{\bar{X^{2}}}}.
\label{eq7.45}%
\end{equation}

Figure \ref{figExp} compares these estimators. In order to compare our
estimator and the $MLE$, we made a simulation study in which we used samples
of sizes $10$ to $55$ by $5$ with $10000$ repeats, where we assumed that the
true value of the model parameter is $\lambda_{true}=5$. The plots in Figure
\ref{figExp} show that the $MCKLE$ has more biased than the $MLE$, but $MCKLE$
in \eqref{eq7.45} which is approximately unbiased coincides with the unbiased
$MLE$.

\begin{figure}[ptb]
\begin{center}
{\includegraphics[height=3.5in,width=6in] {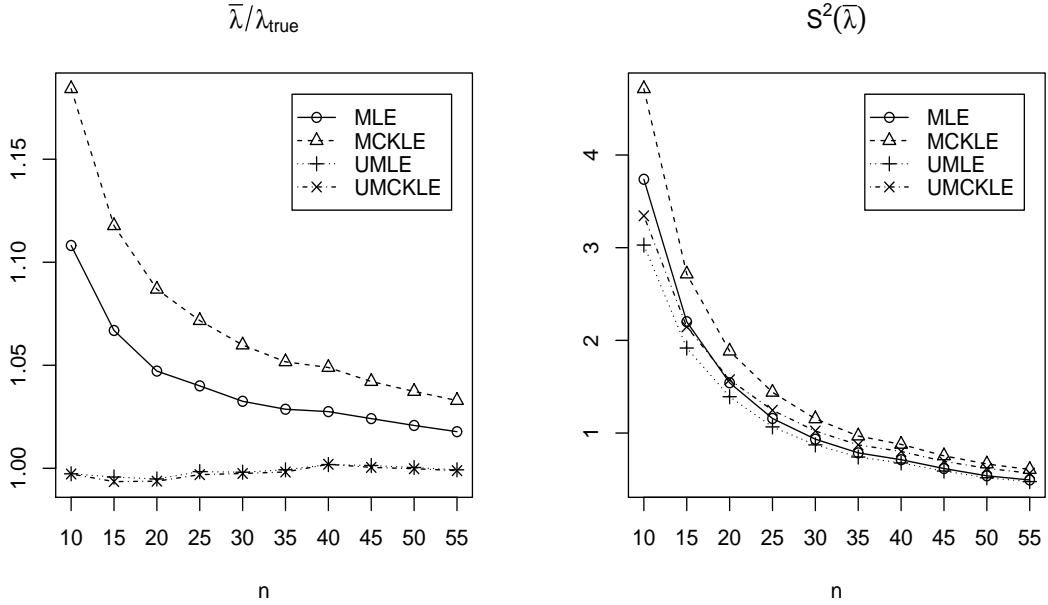}\\[0pt]}
\end{center}
\caption{$\bar{\lambda}/\lambda_{true}$ and $S^{2}\left(  \bar{\lambda
}\right)  $ as functions of sample size}%
\label{figExp}%
\end{figure}
\end{example}

\begin{remark}
In Example \ref{ex(5.2)}, note that $\left\vert X\right\vert $ has exponential
distribution. So, using Example \ref{ex(7.1)}, one can easily find asymptotic
properties of $\widehat{\theta}$ in Laplace distribution.
\end{remark}

\begin{example}
\label{ex(7.2)}Let $\{X_{1},\ldots,X_{n}\}$ be sequence of $i.i.d.$ two
parameter exponential random variables with probability density function
\[
f\left(  x;\mu,\sigma\right)  =\frac{1}{\sigma}e^{-\left(  x-\mu\right)
/\sigma},\text{ \ \ \ }x\geq\mu,~\mu\in%
\mathbb{R}
,~\sigma>0.
\]

It is not difficult to show that $MCKLE$ of $\mu$ and $\sigma$ are,
respectively,
\[
\widehat{\mu}=\bar{X}-\sqrt{\bar{X^{2}}-\bar{X}^{2}},~\widehat{\sigma}%
=\sqrt{\bar{X^{2}}-\bar{X}^{2}}.
\]

These estimators are functions of linear combinations of $x_{i}$'s and
$x_{i}^{2}$'s, and hence by SLLN, $\left(  \widehat{\mu},\widehat{\sigma
}\right)  $ are strongly consistent for $\left(  \mu,\sigma\right)  $, as well
as the $MME$ of $\left(  \mu,\sigma\right)  $.

Now, by CLT and delta method or using Theorem \ref{th(7.4)}, one can show that%
\[
V_{n}^{-1/2}\left(
\begin{array}
[c]{c}%
\widehat{\mu}-\mu\\
\widehat{\sigma}-\sigma
\end{array}
\right)  \overset{d}{\rightarrow}N_{2}\left(  0,I_{2}\right)  ,
\]
where%
\[
V_{n}=\frac{\sigma^{2}}{n}\left[
\begin{array}
[c]{cc}%
1 & -1\\
-1 & 2
\end{array}
\right]  .
\]

Figure \ref{figg2Exp} represents $g\left(  \mu,\sigma\right)  $ for a
simulated sample of size $100$ from two parameter exponential distribution
with parameters $\left(  \mu=3,\sigma=2\right)  $. The figure shows that the
estimators of $\mu$ and $\sigma$ are the values that minimize $g\left(
\mu,\sigma\right)  $.

\begin{figure}[ptb]
\begin{center}
{\includegraphics[height=3.96in] {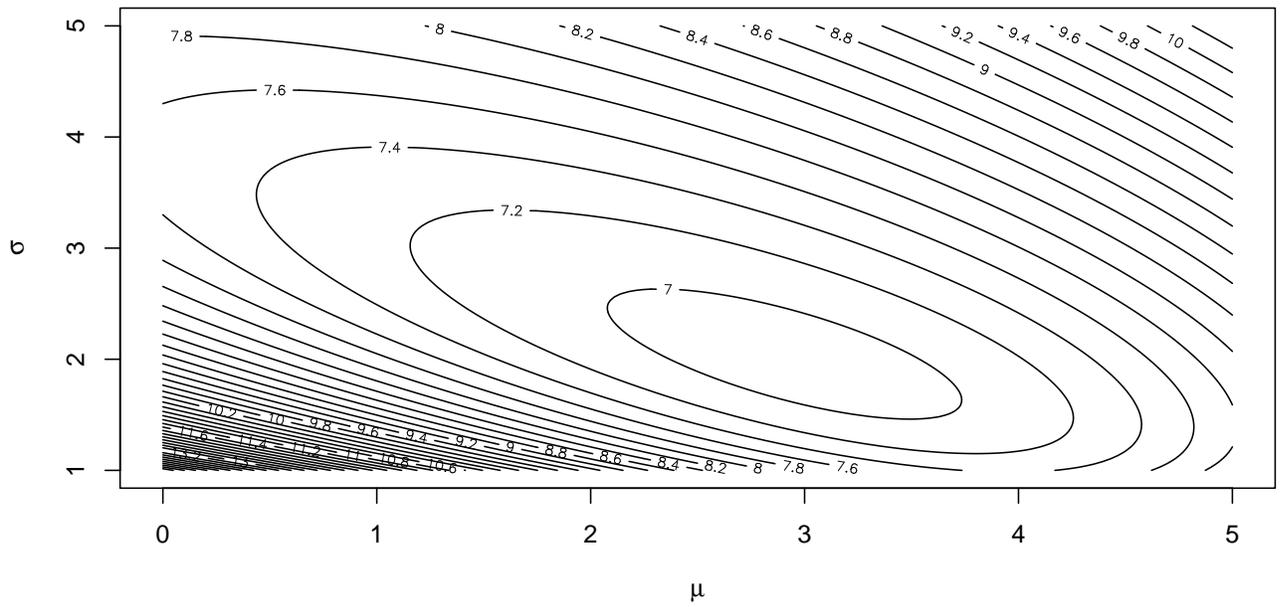}\\[0pt]}
\end{center}
\caption{$g\left(  \mu,\sigma\right)  $ for a simulated sample of size $100$
from two parameter exponential distribution with parameters $\left(
\mu=3,\sigma=2\right)  $}%
\label{figg2Exp}%
\end{figure}
\end{example}

\begin{example}
Let $\{X_{1},\ldots,X_{n}\}$ be sequence of $i.i.d.$ Pareto random variables
with probability density function
\[
f\left(  x;\alpha,\beta\right)  =\frac{\alpha\beta^{\alpha}}{x^{\alpha+1}%
},\text{ \ \ \ }x\geq\beta,~\alpha>0,~\beta>0.
\]

So we simply have%
\[
g\left(  \alpha,\beta\right)  =\frac{\alpha\beta}{\alpha-1}+\alpha\bar{x\log
x}-\alpha\bar{x}\left(  \log\beta+1\right)  +\alpha\beta,~\alpha>1.
\]

Differentiating $g\left(  \alpha,\beta\right)  $ with respect to $\beta$ and
setting zero gives
\[
\widehat{\beta}_{\alpha}=\frac{\bar{x}\left(  \alpha-1\right)  }{\alpha}.
\]

So, if we define the function $g$ of $\alpha$ as follows%
\[
g\left(  \alpha\right)  =g\left(  \alpha,\widehat{\beta}_{\alpha}\right)
=\alpha\bar{x}\log\frac{\alpha}{\alpha-1}+\alpha\bar{x\log x}-\alpha\bar
{x}\log\bar{x},~\alpha>1,
\]
then, derivative of $g\left(  \alpha\right)  $ with respect to $\alpha$ and
setting zero gives%
\[
\log\frac{\alpha}{\alpha-1}-\frac{1}{\alpha-1}+\frac{\bar{x\log x}}{\bar{x}%
}-\log\bar{x}=0.
\]

This equation can be solved numerically to find $MCKLE$ of parameters. Now,
using \ Theorem \ref{th(7.5)}, one can show that%
\[
V_{n}^{-1/2}\left(
\begin{array}
[c]{c}%
\widehat{\alpha}-\alpha\\
\widehat{\beta}-\beta
\end{array}
\right)  \overset{d}{\rightarrow}N_{2}\left(  0,I_{2}\right)  ,
\]
where%
\[
V_{n}=\frac{1}{n\left(  \alpha-2\right)  ^{3}}\left[
\begin{array}
[c]{cc}%
2\alpha\left(  \alpha-1\right)  ^{4} & \alpha\beta\left(  \alpha-1\right)
^{2}\\
\alpha\beta\left(  \alpha-1\right)  ^{2} & \frac{\beta^{2}}{\alpha}\left(
\alpha^{2}-2\alpha+2\right)
\end{array}
\right]  ,~\alpha>2.
\]

Figure \ref{figgPareto} represents $g\left(  \alpha,\beta\right)  $ for a
simulated sample of size $100$ from Pareto distribution with parameters
$\left(  \alpha=2,\beta=5\right)  $. This figure shows that the estimators of
$\alpha$ and $\beta$ are the values that minimize $g\left(  \alpha
,\beta\right)  $.

\begin{figure}[ptb]
\begin{center}
{\includegraphics[height=3.96in] {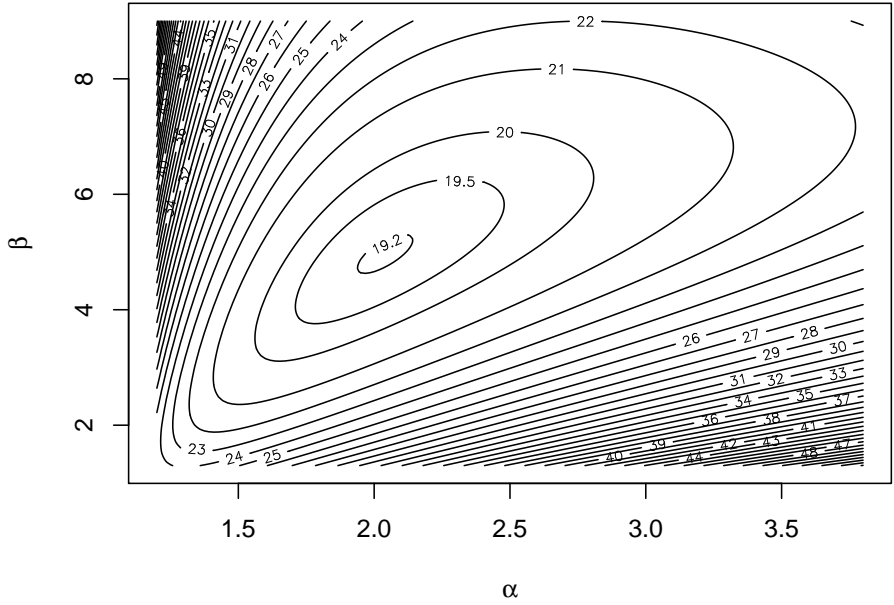}\\[0pt]}
\end{center}
\caption{$g\left(  \alpha,\beta\right)  $ for a simulated sample of size $100$
from Pareto distribution with parameters $\left(  \alpha=2,\beta=5\right)  $}%
\label{figgPareto}%
\end{figure}
\end{example}

\subsection{Asymptotic confidence interval}

In the following we assume that $\mbox{\boldmath $\theta$} $ is a scalar.
Using Theorem \ref{th(7.4)}, we can find an asymptotic confidence interval for
$\mbox{\boldmath $\theta$}$. Under the conditions of Theorem \ref{th(7.4)}, an
asymptotic $100\left(  1-\alpha\right)  \%$\ confidence interval for
$\mbox{\boldmath
$\theta$}$ is defined as%
\begin{equation}
P\left(  -z_{\frac{\alpha}{2}}<\frac{\sqrt{n}\left(  \widehat
{\mbox{\boldmath
$\theta$}}-\mbox{\boldmath
$\theta$}\right)  }{\sigma_{F}}<z_{\frac{\alpha}{2}}\right)  =1-\alpha,
\label{eq8.1}%
\end{equation}
where $z_{\alpha}$ is the $\left(  1-\alpha\right)  $-quantile of the
$N\left(  0,1\right)  $ and $\sigma_{F}$\ is defined in Theorem \ref{th(7.4)}.
If inequalities in \eqref{eq8.1} are not invertible, then we can use
$\widehat{\sigma}_{F}$ instead of $\sigma_{F}$ to obtain an approximate
confidence interval, where $\widehat{\sigma}_{F}$ is $\sigma_{F}$\ that
evaluated at $\mbox{\boldmath
$\theta$}=\widehat{\mbox{\boldmath
$\theta$}}$.

\cite{Pawitan:2001} presented an approach which is called likelihood interval
for parameters. Using his approach, one can find a divergence interval for the
parameter. Similar to the likelihood interval that is defined by
\cite{Pawitan:2001}, we define a divergence interval as a set of parameter
values with low enough divergence:%
\begin{equation}
\left\{  \mbox{\boldmath
$\theta$}\ s.t.\ \exp\left[  g\left(  \widehat{\mbox{\boldmath
$\theta$}}\right)  -g\left(  \mbox{\boldmath
$\theta$}\right)  \right]  >k\right\}  , \label{eq8.2}%
\end{equation}
for some cutoff point $k$, where $\exp\left[  g\left(  \widehat
{\mbox{\boldmath
$\theta$}}\right)  -g\left(  \mbox{\boldmath
$\theta$}\right)  \right]  $ is the normalized divergence with $g\left(
\mbox{\boldmath
$\theta$}\right)  $ as \eqref{eq2.4} or \eqref{eq5.8.4}; see
\citet[chapter 5]{Basu:et:al:2011}. Let us define the quantity $Q$ as%
\[
Q\left(  \widehat{\mbox{\boldmath
$\theta$}},\mbox{\boldmath
$\theta$}\right)  =\frac{2n\left[  g\left(  \mbox{\boldmath
$\theta$}\right)  -g\left(  \widehat{\mbox{\boldmath
$\theta$}}\right)  \right]  }{\sigma_{F}^{2}\cdot g^{\prime\prime}\left(
\widehat{\mbox{\boldmath
$\theta$}}\right)  },
\]
where%
\[
g^{\prime\prime}\left(  \mbox{\boldmath
$\theta$}\right)  =\frac{\partial^{2}}{\partial\mbox{\boldmath
$\theta$}^{2}}g\left(  \mbox{\boldmath
$\theta$}\right)  .
\]

Using Theorem \ref{th(7.4)}, we show that this quantity is asymptotically a
pivotal quantity. In other words, under the conditions of Theorem
\ref{th(7.4)},%
\begin{equation}
Q\left(  \widehat{\mbox{\boldmath
$\theta$}},\mbox{\boldmath
$\theta$}\right)  \overset{d}{\rightarrow}\chi_{1}^{2}. \label{eq8.5}%
\end{equation}

This is so, because using Taylor expansion of $g\left(  \mbox{\boldmath
$\theta$}\right)  $ around $\widehat{\mbox{\boldmath
$\theta$}}$, we have
\begin{equation}
Q\left(  \widehat{\mbox{\boldmath
$\theta$}},\mbox{\boldmath
$\theta$}\right)  \approx\frac{n\left(  \mbox{\boldmath
$\theta$}-\widehat{\mbox{\boldmath
$\theta$}}\right)  ^{2}}{\sigma_{F}^{2}}. \label{eq8.6}%
\end{equation}

Now using this fact, we can find the divergence interval for
$\mbox{\boldmath
$\theta$}$.

\begin{theorem}
\label{th(8.1)}Under the conditions of Theorem \ref{th(7.4)}, the asymptotic
$100\left(  1-\alpha\right)  \%$ divergence interval for $\mbox{\boldmath
$\theta$}$ is defined as \eqref{eq8.2}, with%
\[
k=\exp\left\{  -\frac{1}{2n}c\left(  \widehat{\mbox{\boldmath
$\theta$}}\right)  \chi_{\alpha,1}^{2}\right\}  ,
\]
where $\chi_{\alpha,1}^{2}$ is the $\left(  1-\alpha\right)  $-quantile of the
$\chi_{1}^{2}$ and%
\[
c\left(  \mbox{\boldmath
$\theta$}\right)  =\sigma_{F}^{2}\cdot g^{\prime\prime}\left(
\mbox{\boldmath
$\theta$}\right)  .
\]

\end{theorem}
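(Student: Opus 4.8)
The plan is to obtain the stated interval by inverting the asymptotic pivot $Q\left(\widehat{\mbox{\boldmath $\theta$}},\mbox{\boldmath $\theta$}\right)$ rather than proving anything new from scratch. The crucial ingredient is \eqref{eq8.5}, namely $Q\left(\widehat{\mbox{\boldmath $\theta$}},\mbox{\boldmath $\theta$}\right)\overset{d}{\rightarrow}\chi_1^2$, which has already been established in the text just above the theorem: a second-order Taylor expansion of $g$ about the minimizer $\widehat{\mbox{\boldmath $\theta$}}$, where the first derivative vanishes, reduces $Q$ to $n\left(\mbox{\boldmath $\theta$}-\widehat{\mbox{\boldmath $\theta$}}\right)^2/\sigma_F^2$ as in \eqref{eq8.6}, and Theorem \ref{th(7.4)} gives $\sqrt{n}\left(\widehat{\mbox{\boldmath $\theta$}}-\mbox{\boldmath $\theta$}\right)/\sigma_F\overset{d}{\rightarrow}N(0,1)$, whose square is $\chi_1^2$. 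Taking \eqref{eq8.5} as established, I would start from the resulting coverage statement
\[
P\left(Q\left(\widehat{\mbox{\boldmath $\theta$}},\mbox{\boldmath $\theta$}\right)\leq\chi_{\alpha,1}^2\right)\longrightarrow 1-\alpha,
\]
so that $\left\{\mbox{\boldmath $\theta$}:Q\left(\widehat{\mbox{\boldmath $\theta$}},\mbox{\boldmath $\theta$}\right)\leq\chi_{\alpha,1}^2\right\}$ is an asymptotic $100(1-\alpha)\%$ confidence region for $\mbox{\boldmath $\theta$}$.

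The second step is a purely algebraic inversion of this inequality into the exponential form \eqref{eq8.2}. Writing $c\left(\widehat{\mbox{\boldmath $\theta$}}\right)=\sigma_F^2\cdot g''\left(\widehat{\mbox{\boldmath $\theta$}}\right)$ and substituting the definition of $Q$, the membership condition $Q\left(\widehat{\mbox{\boldmath $\theta$}},\mbox{\boldmath $\theta$}\right)\leq\chi_{\alpha,1}^2$ becomes $2n\left[g\left(\mbox{\boldmath $\theta$}\right)-g\left(\widehat{\mbox{\boldmath $\theta$}}\right)\right]\leq c\left(\widehat{\mbox{\boldmath $\theta$}}\right)\chi_{\alpha,1}^2$. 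Because $\widehat{\mbox{\boldmath $\theta$}}$ minimizes $g$ and $g''\left(\widehat{\mbox{\boldmath $\theta$}}\right)$ is positive definite, one has $c\left(\widehat{\mbox{\boldmath $\theta$}}\right)>0$, so dividing preserves the direction and the condition is equivalent to
\[
g\left(\widehat{\mbox{\boldmath $\theta$}}\right)-g\left(\mbox{\boldmath $\theta$}\right)\geq-\frac{1}{2n}c\left(\widehat{\mbox{\boldmath $\theta$}}\right)\chi_{\alpha,1}^2.
\]
Applying the increasing map $\exp(\cdot)$ to both sides turns this into $\exp\left[g\left(\widehat{\mbox{\boldmath $\theta$}}\right)-g\left(\mbox{\boldmath $\theta$}\right)\right]\geq k$ with precisely $k=\exp\left\{-\frac{1}{2n}c\left(\widehat{\mbox{\boldmath $\theta$}}\right)\chi_{\alpha,1}^2\right\}$, which is \eqref{eq8.2}. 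This identifies the confidence region with the divergence interval and finishes the argument.

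The main obstacle is not this inversion, which is routine, but the rigorous justification of the pivot \eqref{eq8.5}. The Taylor step is only heuristic as written, so to be careful I would bound the remainder using consistency of $\widehat{\mbox{\boldmath $\theta$}}$ (Theorem \ref{th(7.3)}) together with continuity of $g''$, and then invoke Slutsky's theorem to replace the random denominator $g''\left(\widehat{\mbox{\boldmath $\theta$}}\right)$ by its limit $g''\left(\mbox{\boldmath $\theta$}\right)$; strictly one should also check that, when $\sigma_F^2$ is unknown and replaced by a consistent estimate, the limiting $\chi_1^2$ law is undisturbed, again by Slutsky. Granting \eqref{eq8.5} exactly as the text does, however, the theorem follows at once from the displayed inversion.
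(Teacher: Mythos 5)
Your proposal is correct and follows essentially the same route as the paper: both arguments take the pivotal convergence \eqref{eq8.5} as given and then algebraically identify the event $\left\{Q\left(\widehat{\mbox{\boldmath $\theta$}},\mbox{\boldmath $\theta$}\right)\leq\chi_{\alpha,1}^{2}\right\}$ with the divergence interval \eqref{eq8.2} for the stated cutoff $k$ (the paper simply runs the computation in the opposite direction, starting from the coverage probability of \eqref{eq8.2} and solving for $k$). Your added remarks --- that $c\left(\widehat{\mbox{\boldmath $\theta$}}\right)>0$ is needed to preserve the inequality direction, and that replacing $\sigma_{F}^{2}$ by a consistent estimate requires Slutsky --- are sound and slightly more careful than the paper's own write-up, but do not constitute a different method.
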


\begin{proof} Using (\ref{eq8.5}), the probability that
divergence interval \eqref{eq8.2} covers $\mbox{\boldmath
$\theta$}$\ is%
\begin{eqnarray*}
P\left( \exp \left[ g\left( \widehat{\mbox{\boldmath
$\theta$}}\right) -g\left( \mbox{\boldmath
$\theta$}\right) \right] >k\right) &=&P\left( Q\left( \widehat{%
\mbox{\boldmath
$\theta$}},\mbox{\boldmath
$\theta$}\right) <-\frac{2n\log k}{\sigma _{F}^{2}\cdot g^{\prime \prime
}\left( \widehat{\mbox{\boldmath
$\theta$}}\right) }\right) \\
&=&P\left( \chi _{1}^{2}<-\frac{2n\log k}{\sigma _{F}^{2}\cdot g^{\prime
\prime }\left( \widehat{\mbox{\boldmath
$\theta$}}\right) }\right) .
\end{eqnarray*}
So, for some $0<\alpha <1$ we choose a cutoff%
\begin{equation*}
k=\exp \left\{ -\frac{1}{2n}\sigma _{F}^{2}\cdot g^{\prime \prime }\left(
\widehat{\mbox{\boldmath
$\theta$}}\right) \chi _{\alpha ,1}^{2}\right\} .
\end{equation*}
Since $\sigma _{F}^{2}$ is unknown, we estimate it with $\widehat{\sigma }%
_{F}^{2}$. This completes the proof. \end{proof}

\begin{remark}
Form \eqref{eq8.6}, The asymptotic confidence interval in \eqref{eq8.1} with
$\widehat{\sigma}_{F}$ instead of $\sigma_{F}$, is approximately equivalent
with that in \eqref{eq8.2}. Also, in \eqref{eq8.1} and \eqref{eq8.2}, we can
practically use sample version of $\sigma_{F}^{2}$ that is defined in Theorem
\ref{th(7.6)}.
\end{remark}

\begin{example}
In Example \ref{ex(7.1)}, the asymptotic $100\left(  1-\alpha\right)  \%$
divergence interval for $\lambda$ is in form \eqref{eq8.2} with%
\[
k=\exp\left\{  -\frac{5}{4n}\sqrt{\frac{\bar{X^{2}}}{2}}\chi_{\alpha,1}%
^{2}\right\}  .
\]

In other words, the confidence interval is in form%
\[
\left(  L,U\right)  =\frac{b\pm\sqrt{b^{2}-2\bar{X^{2}}}}{\bar{X^{2}}},
\]
with $b=-\log k+\sqrt{2\bar{X^{2}}}$. For a simulated sample of size $n=30$
from exponential distribution with parameter $\lambda=3$, Figure
\ref{figDCIexp} shows normalized divergence and asymptotic $95\%$\ confidence
interval for $\lambda$. In this typical sample $\bar{x^{2}}=0.2063127$,
$\widehat{\lambda}=3.113522$, $\widehat{\lambda}_{u}=2.930374$, $k=0.9498908$
and $\left(  L,U\right)  =\left(  2.092375,4.633022\right)  $.

\begin{figure}[ptb]
\begin{center}
{\includegraphics[height=3.5in] {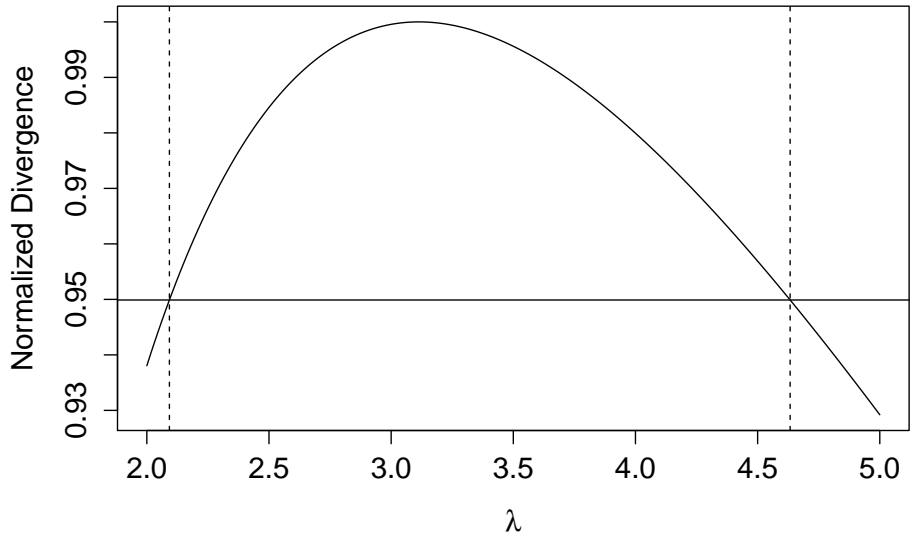}\\[0pt]}
\end{center}
\caption[Asymptotic $95\%$\ confidence interval for $\lambda\,$, in
$\exp\left(  \lambda=3\right)  $.]{Normalized divergence and asymptotic
$95\%$\ confidence interval for $\lambda\,$, in a simulated sample of size
$n=30$ from exponential distribution with parameter $\lambda=3$.}%
\label{figDCIexp}%
\end{figure}
\end{example}

\begin{remark}
When $\dim\left(  \mbox{\boldmath
$\theta$}\right)  >1$, we can't easily find a pivotal quantity. In these
cases, using quantiles of $g^{\ast}$ from repeated samples, we can find
cutoffs of divergence-based confidence regions.
\end{remark}

\begin{example}
In Example \ref{ex(7.2)}, Using $10000$ replicated simulated samples of size
$100$ from two parameter exponential distribution with parameters $\left(
\mu=3,\sigma=2\right)  $, we can find asymptotic cutoffs of divergence-based
confidence regions for $\left(  \mu,\sigma\right)  $. Figure \ref{figg2ExpCI}
shows asymptotic $90\%,70\%,\ldots,10\%$ confidence regions for $\left(
\mu,\sigma\right)  $. \begin{figure}[ptb]
\begin{center}
{\includegraphics[height=3.78in] {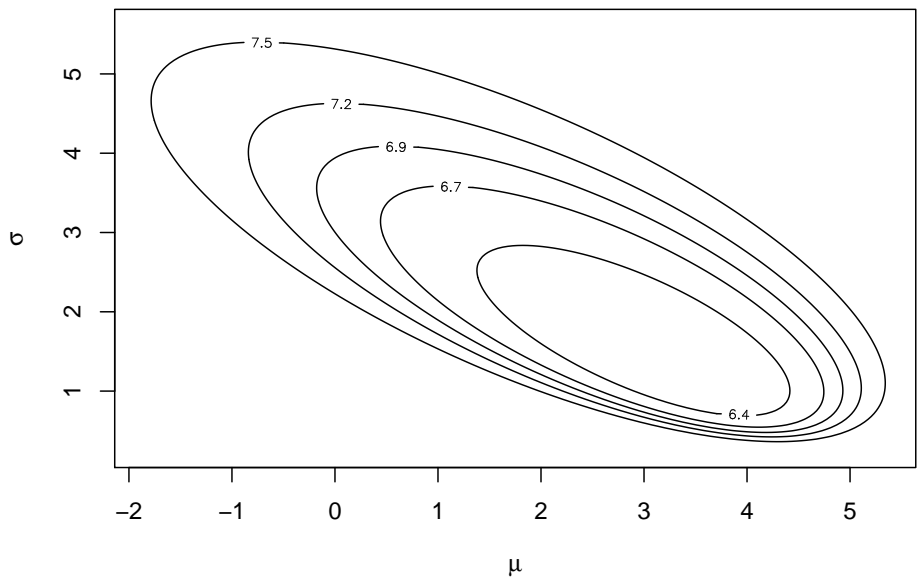}\\[0pt]}
\end{center}
\caption{Asymptotic $90\%,70\%,\ldots,10\%$ confidence regions for $\left(
\mu,\sigma\right)  $, using $10000$ replicated simulated samples of size $100$
from two parameter exponential distribution with parameters $\left(
\mu=3,\sigma=2\right)  $}%
\label{figg2ExpCI}%
\end{figure}
\end{example}

\subsection{Asymptotic hypothesis testing}

Let $\dim\left(  \mbox{\boldmath
$\theta$}\right)  =1$ and $\mathbf{\Theta}_{0}$ and $\mathbf{\Theta}_{1}$ be
two subsets of $\mathbf{\Theta}$ such that%
\[
\mathbf{\Theta}_{0}\cup\mathbf{\Theta}_{1}=\mathbf{\Theta,~\Theta}_{0}%
\cap\mathbf{\Theta}_{1}=\phi.
\]

We are interested in testing hypotheses%
\begin{equation}
H_{0}:\mbox{\boldmath
$\theta$}\in\mathbf{\Theta}_{0}\text{\ vs }H_{1}:\mbox{\boldmath
$\theta$}\in\mathbf{\Theta}_{1}. \label{eq9.2}%
\end{equation}

It is clear that by inverting asymptotic confidence interval in \eqref{eq8.1},
we can find a critical region for statistical tests (asymptotically of level
$\alpha$)%
\begin{equation}
H_{0}:\mbox{\boldmath
$\theta$}=\mbox{\boldmath
$\theta$}_{0}\text{\ vs }H_{1}:\mbox{\boldmath
$\theta$}\neq\mbox{\boldmath
$\theta$}_{0}, \label{eq9.3}%
\end{equation}
for a given $\mbox{\boldmath
$\theta$}_{0}$. Similar to approach of generalized likelihood ratio test,
\cite{Basu:1993} and \cite{Lindsay:1994} presented a divergence difference
test $\left(  DDT\right)  $ statistic based on \eqref{eq1.1} for testing
hypotheses in \eqref{eq9.3} in continuous and discrete cases; also see
\citet[chapter 5]{Basu:et:al:2011}. Here, we perform an alternative
statistical test based on $CKL$ divergence. For testing hypotheses in
\eqref{eq9.2}, we define the generalized divergence difference test $\left(
GDDT\right)  $ statistic as%
\begin{align*}
GDDT  &  =2n\left[  \underset{\mbox{\boldmath
$\theta$}\in\mathbf{\Theta}_{0}}{\inf}g\left(  \mbox{\boldmath
$\theta$}\right)  -\underset{\mbox{\boldmath
$\theta$}\in\mathbf{\Theta}}{\inf}g\left(  \mbox{\boldmath
$\theta$}\right)  \right] \\
&  =2n\left[  g\left(  \widehat{\mbox{\boldmath
$\theta$}}_{0}\right)  -g\left(  \widehat{\mbox{\boldmath
$\theta$}}\right)  \right]  .
\end{align*}

We consider behavior of $GDDT$ as a test statistic for a null hypothesis of
the form $H_{0}:\mbox{\boldmath
$\theta$}\in\mathbf{\Theta}_{0}$.

\begin{theorem}
\label{th(9.2)}Under the conditions of Theorem \ref{th(7.4)} and the null
hypothesis $H_{0}:\mbox{\boldmath
$\theta$}\in\mathbf{\Theta}_{0}$,%
\[
GDDT\overset{d}{\rightarrow}c\left(  \widehat{\mbox{\boldmath
$\theta$}}_{0}\right)  \chi_{1}^{2}.
\]

\end{theorem}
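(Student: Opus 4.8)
The plan is to establish the limiting distribution of $GDDT$ by performing a second-order Taylor expansion of $g$ about the global minimizer $\widehat{\mbox{\boldmath $\theta$}}$, exactly mirroring the argument that produced the pivotal quantity $Q$ in \eqref{eq8.5} and \eqref{eq8.6}. Since $\widehat{\mbox{\boldmath $\theta$}}$ minimizes $g$ over all of $\mathbf{\Theta}$, the first-order term vanishes, $g'(\widehat{\mbox{\boldmath $\theta$}})=0$, and we obtain
\[
g\left(  \widehat{\mbox{\boldmath $\theta$}}_{0}\right)  -g\left(  \widehat{\mbox{\boldmath $\theta$}}\right)  \approx\frac{1}{2}g^{\prime\prime}\left(  \widehat{\mbox{\boldmath $\theta$}}\right)  \left(  \widehat{\mbox{\boldmath $\theta$}}_{0}-\widehat{\mbox{\boldmath $\theta$}}\right)  ^{2},
\]
so that $GDDT\approx n\,g^{\prime\prime}(\widehat{\mbox{\boldmath $\theta$}})(\widehat{\mbox{\boldmath $\theta$}}_{0}-\widehat{\mbox{\boldmath $\theta$}})^{2}$.

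The key step is then to identify the asymptotic distribution of the quadratic form on the right. Under $H_{0}$ with $\mbox{\boldmath $\theta$}\in\mathbf{\Theta}_{0}$, the restricted estimator $\widehat{\mbox{\boldmath $\theta$}}_{0}$ should be close to the true value $\mbox{\boldmath $\theta$}$; in the one-dimensional setting where $\mathbf{\Theta}_{0}$ pins down the parameter, I would argue that $\sqrt{n}(\widehat{\mbox{\boldmath $\theta$}}_{0}-\widehat{\mbox{\boldmath $\theta$}})$ has the same limit as $\sqrt{n}(\mbox{\boldmath $\theta$}-\widehat{\mbox{\boldmath $\theta$}})$. Invoking Theorem \ref{th(7.4)}, we have $\sqrt{n}(\widehat{\mbox{\boldmath $\theta$}}-\mbox{\boldmath $\theta$})\overset{d}{\rightarrow}N(0,\sigma_{F}^{2})$, hence $n(\widehat{\mbox{\boldmath $\theta$}}_{0}-\widehat{\mbox{\boldmath $\theta$}})^{2}/\sigma_{F}^{2}\overset{d}{\rightarrow}\chi_{1}^{2}$. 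Combining this with the Taylor expansion gives
\[
GDDT\approx n\,g^{\prime\prime}\left(  \widehat{\mbox{\boldmath $\theta$}}\right)  \left(  \widehat{\mbox{\boldmath $\theta$}}_{0}-\widehat{\mbox{\boldmath $\theta$}}\right)  ^{2}=\sigma_{F}^{2}\,g^{\prime\prime}\left(  \widehat{\mbox{\boldmath $\theta$}}\right)  \cdot\frac{n\left(  \widehat{\mbox{\boldmath $\theta$}}_{0}-\widehat{\mbox{\boldmath $\theta$}}\right)  ^{2}}{\sigma_{F}^{2}}\overset{d}{\rightarrow}c\left(  \widehat{\mbox{\boldmath $\theta$}}\right)  \chi_{1}^{2},
\]
where $c(\mbox{\boldmath $\theta$})=\sigma_{F}^{2}\cdot g^{\prime\prime}(\mbox{\boldmath $\theta$})$ is precisely the scaling factor defined in Theorem \ref{th(8.1)}, and I would finish by using the consistency of $\widehat{\mbox{\boldmath $\theta$}}$ (Theorem \ref{th(7.3)}) together with Slutsky's theorem to replace $c(\widehat{\mbox{\boldmath $\theta$}})$ by its limit $c(\widehat{\mbox{\boldmath $\theta$}}_{0})$ in the statement, since both converge to $c(\mbox{\boldmath $\theta$})$.

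The main obstacle I anticipate is the rigorous justification that $\sqrt{n}(\widehat{\mbox{\boldmath $\theta$}}_{0}-\widehat{\mbox{\boldmath $\theta$}})$ inherits the $N(0,\sigma_{F}^{2})$ limit. This requires that the null restriction $\mathbf{\Theta}_{0}$ behave like a single point asymptotically (so that $\widehat{\mbox{\boldmath $\theta$}}_{0}$ is effectively $\mbox{\boldmath $\theta$}_{0}$ converging to the true $\mbox{\boldmath $\theta$}$), and that the higher-order remainder in the Taylor expansion is $o_{p}(1)$ after multiplication by $n$; controlling this remainder needs the boundedness and continuity of $\mbox{\boldmath $\psi$}$ from Theorem \ref{th(7.3)} and the existence of $g^{\prime\prime}$. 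The scalar assumption $\dim(\mbox{\boldmath $\theta$})=1$ is what makes the limit a single $\chi_{1}^{2}$ rather than a mixture, and I would state this reliance explicitly.
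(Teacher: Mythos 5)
Your proposal follows essentially the same route as the paper's proof: a second-order Taylor expansion of $g$ about the unrestricted minimizer $\widehat{\mbox{\boldmath $\theta$}}$ (so the first-order term vanishes), identification of $GDDT$ with $n\,g^{\prime\prime}(\widehat{\mbox{\boldmath $\theta$}})(\widehat{\mbox{\boldmath $\theta$}}_{0}-\widehat{\mbox{\boldmath $\theta$}})^{2}$, an appeal to Theorem \ref{th(7.4)} for the $\chi_{1}^{2}$ limit of the normalized quadratic form, and replacement of $c(\widehat{\mbox{\boldmath $\theta$}})$ by $c(\widehat{\mbox{\boldmath $\theta$}}_{0})$ via consistency. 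The only difference is that you explicitly flag the identification of $\sqrt{n}(\widehat{\mbox{\boldmath $\theta$}}_{0}-\widehat{\mbox{\boldmath $\theta$}})$ with $\sqrt{n}(\mbox{\boldmath $\theta$}-\widehat{\mbox{\boldmath $\theta$}})$ under $H_{0}$ as the delicate step, a point the paper passes over silently.
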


\begin{proof}
Using Taylor expansion of $g\left(  \widehat{\mbox{\boldmath
$\theta$}}_{0}\right)  $ around $\widehat{\mbox{\boldmath
$\theta$}}$ we get%
\begin{align*}
2n\left[  g\left(  \widehat{\mbox{\boldmath
$\theta$}}_{0}\right)  -g\left(  \widehat{\mbox{\boldmath
$\theta$}}\right)  \right]    & \approx2n\left[  g\left(  \widehat
{\mbox{\boldmath
$\theta$}}\right)  +\left(  \widehat{\mbox{\boldmath
$\theta$}}_{0}-\widehat{\mbox{\boldmath
$\theta$}}\right)  g^{\prime}\left(  \widehat{\mbox{\boldmath
$\theta$}}\right)  +\frac{\left(  \widehat{\mbox{\boldmath
$\theta$}}_{0}-\widehat{\mbox{\boldmath
$\theta$}}\right)  ^{2}}{2}g^{\prime\prime}\left(  \widehat{\mbox{\boldmath
$\theta$}}\right)  -g\left(  \widehat{\mbox{\boldmath
$\theta$}}\right)  \right]  \\
& =\frac{n\left(  \widehat{\mbox{\boldmath
$\theta$}}_{0}-\widehat{\mbox{\boldmath
$\theta$}}\right)  ^{2}}{2}g^{\prime\prime}\left(  \widehat{\mbox{\boldmath
$\theta$}}\right)  .
\end{align*}
Under $H_{0}$, the quantity $g^{\prime\prime}\left(  \widehat
{\mbox{\boldmath
$\theta$}}\right)  $ convergence to $g^{\prime\prime}\left(  \widehat
{\mbox{\boldmath
$\theta$}}_{0}\right)  $. Thus%
\[
GDDT\approx c\left(  \widehat{\mbox{\boldmath
$\theta$}}_{0}\right)  \frac{n\left(  \widehat{\mbox{\boldmath
$\theta$}}_{0}-\widehat{\mbox{\boldmath
$\theta$}}\right)  ^{2}}{\sigma_{F_{0}}^{2}},
\]
where $\sigma_{F_{0}}^{2}$ is $\sigma_{F}^{2}$ that evaluated at
$\mbox{\boldmath
$\theta$}=\widehat{\mbox{\boldmath
$\theta$}}_{0}$. Now using Theorem \ref{th(7.4)} the proof is complete.
\end{proof}

\begin{remark}
Under the conditions of Theorem \ref{th(9.2)}, we can obtain the following
approximation for the power function in a given $\mbox{\boldmath
$\theta$}_{1}\in\mathbf{\Theta}_{1}$ as%
\[
\beta\left(  \mbox{\boldmath
$\theta$}_{1}\right)  \approx P\left(  \chi_{1}^{2}>\frac{2n\left[  g\left(
\mbox{\boldmath
$\theta$}_{1}\right)  -g\left(  \widehat{\mbox{\boldmath
$\theta$}}_{0}\right)  \right]  +c\left(  \widehat{\mbox{\boldmath
$\theta$}}_{0}\right)  \chi_{\alpha,1}^{2}}{c\left(  \mbox{\boldmath
$\theta$}_{1}\right)  }\right)  .
\]

As an important application of the above approximation, one can find the
approximate sample size that guarantees a specific power $\beta$ for a given
$\mbox{\boldmath
$\theta$}_{1}\in\mathbf{\Theta}_{1}$. Let $n_{0}$ be the positive root of the
equation%
\[
\beta=P\left(  \chi_{1}^{2}>\frac{2n\left[  g\left(  \mbox{\boldmath
$\theta$}_{1}\right)  -g\left(  \widehat{\mbox{\boldmath
$\theta$}}_{0}\right)  \right]  +c\left(  \widehat{\mbox{\boldmath
$\theta$}}_{0}\right)  \chi_{\alpha,1}^{2}}{c\left(  \mbox{\boldmath
$\theta$}_{1}\right)  }\right)  ,
\]
i.e.,%
\[
n_{0}=\frac{c\left(  \mbox{\boldmath
$\theta$}_{1}\right)  \chi_{\beta,1}^{2}-c\left(  \widehat{\mbox{\boldmath
$\theta$}}_{0}\right)  \chi_{\alpha,1}^{2}}{2\left[  g\left(
\mbox{\boldmath
$\theta$}_{1}\right)  -g\left(  \widehat{\mbox{\boldmath
$\theta$}}_{0}\right)  \right]  }.
\]

The required sample size is then%
\begin{equation}
n^{\ast}=\left[  n_{0}\right]  +1, \label{eq9.15}%
\end{equation}
where $\left[  \cdot\right]  $ is used here to denote "integer part of".
\end{remark}

\begin{remark}
In special case that $\mathbf{\Theta}_{0}=\left\{  \mbox{\boldmath
$\theta$}_{0}\right\}  $, we can find a critical region for statistical tests
(asymptotically of level $\alpha$) in \eqref{eq9.3}. One can do this by
replacing $\widehat{\mbox{\boldmath
$\theta$}}_{0}$ with $\mbox{\boldmath
$\theta$}_{0}$.
\end{remark}

\begin{example}
In Example \ref{ex(7.1)}, the statistical test (asymptotically of level
$\alpha$) of null hypothesis $H_{0}:\lambda=\lambda_{0}$ against the
alternative $H_{1}:\lambda\neq\lambda_{0}$ is defined with the critical region%
\[
\bar{X^{2}}>\left(  \frac{b+\sqrt{b^{2}-4ac}}{2a}\right)  ^{2}\text{ or }%
\bar{X^{2}}<\left(  \frac{b-\sqrt{b^{2}-4ac}}{2a}\right)  ^{2},
\]
where $a=n\lambda_{0}^{2}$, $b=2\sqrt{2}n\lambda_{0}$ and $c=2n-\frac{5}%
{2}\chi_{\alpha,1}^{2}$.
\end{example}

\end{document}